\numberwithin{equation}{section}
\theoremstyle{plain}
\newtheorem{thm}{Theorem}[section]
\newtheorem{theorem}[thm]{Theorem}
\newtheorem{lemma}[thm]{Lemma}
\newtheorem{corollary}[thm]{Corollary}
\newtheorem{proposition}[thm]{Proposition}
\theoremstyle{definition}
\newtheorem{remark}[thm]{Remark}
\newtheorem{definition}[thm]{Definition}
\newtheorem{example}[thm]{Example}
\newtheorem{defn-thm}[thm]{Definition-Theorem}
\newtheorem*{ack}{Acknowledgments}
\newcommand{\G}{{\mathbb G}}
\renewcommand{\H}{{\mathbb H}}
\newcommand{\Ker}{{ Ker}}
\newcommand{\bp}{\bar{\partial}}
\newcommand{\btheorem}{\begin{theorem}}
\newcommand{\etheorem}{\end{theorem}}
\newcommand{\bproposition}{\begin{proposition}}
\newcommand{\eproposition}{\end{proposition}}
\newcommand{\bdefinition}{\begin{definition}}
\newcommand{\edefinition}{\end{definition}}
\newcommand{\bcorollary}{\begin{corollary}}
\newcommand{\ecorollary}{\end{corollary}}
\newcommand{\bproof}{\begin{proof}}
\newcommand{\eproof}{\end{proof}}
\newcommand{\bremark}{\begin{remark}}
\newcommand{\eremark}{\end{remark}}
\newcommand{\eexample}{\end{example}}
\newcommand{\bexample}{\begin{example}}
\newcommand{\la}{\langle}
\newcommand{\elemma}{\end{lemma}}
\newcommand{\blemma}{\begin{lemma}}
\newcommand{\ra}{\rangle}
\newcommand{\p}{\partial}
\renewcommand{\bar}{\overline}
\renewcommand{\phi}{\varphi}
\newcommand{\ka}{K\"ahler }
\newcommand{\ee}{\end{eqnarray*}}
\newcommand{\be}{\begin{eqnarray*}}
\newcommand{\beq}{\begin{equation}}
\newcommand{\eeq}{\end{equation}}
\newcommand{\bd}{\begin{enumerate}}
\newcommand{\ed}{\end{enumerate}}
\begin{document}
\title{Extension of holomorphic canonical forms 
on complete $d$-bounded\\ 
\ka manifolds}
\makeatletter
\let\uppercasenonmath\@gobble
\let\MakeUppercase\relax
\let\scshape\relax
\makeatother

\author{Chunle Huang}
\address{Chunle Huang, Institute of Mathematics, Hunan University, Changsha, 410082, China.}
\email{chunle@zju.edu.cn; 402961544@qq.com}
\maketitle

\begin{abstract}
In this paper we study the extension of holomorphic canonical forms on complete $d$-bounded \ka manifolds by using $L^2$ analytic methods  and $L^2$ Hogde theory, which generalizes some classical results to noncompact cases.
\end{abstract}
\maketitle
\tableofcontents

\section{Introduction}
Following the recent papers \cite{LRY, LZ} we will present some results 
in this paper about the extension of holomorphic canonical forms on complete $d$-bounded \ka manifolds,  by using $L^2$ analytic methods \cite{Dem,Demailly02,Gromov,Hormander09} and $L^2$ Hogde theory \cite{GH,KK,LZ,Zucker22}, which is related to some extent to Siu’s conjecture of the invariance of plurigenera for any compact \ka manifolds \cite{Siu98, Siu}. Part of our results generalizes some important results obtained in \cite{LRY, LZ} from the compact to noncompact cases.
Recall that a differential form $\alpha$ on a Riemannian manifold $(X, g)$ is said to be bounded with respect to the Riemannian
metric $g$ if the $L^{\infty}$-norm of $\alpha$ is finite, that is,
\begin{equation}\nonumber
\|\alpha\|_{L^{\infty}(X)}:=\sup_{x\in X}\|\alpha\|_g<\infty.
\end{equation}
Following Gromov\cite{Gromov} we say that $\alpha$ is $d$-bounded if $\alpha$ is the exterior differential of a bounded form $\beta$, that is, 
$$\alpha= d\beta\,\,\text{with}\,\,\|\beta\|_{L^{\infty}(X)}<\infty.$$
In particular, we need to introduce the following definition.
\begin{definition}[\cite{Gromov}]
A \ka manifold $(X,\omega)$ of dimension $n$ is called a $d$-bounded \ka manifold, 
if the \ka form $\omega$ is $d$-bounded, that is, there exists a bounded $1$-form $\theta$ on $X$ with respect to $g$ such that 
$\omega=d\theta$, where $g$ is the Riemannian metric indued by $\omega$. \end{definition}
For any \ka manifold $(X,\omega)$ of dimension $n$, we denote by $L_{(2)}^{p,q}(X,\omega)$ the space of $(p,q)$ forms
$\psi$ on $X$ with $0\leq p,q\leq n$ such that $\psi$ is $L^2$ global integrable with respect to $\omega$ in this paper. For the details of $L^2$ space and $L^2$ method we recommend the reader to see \cite{Dem,Demailly02,Fujino,Hormander09,Matsumura S.}.
By 1.4.A. Theorem in \cite{Gromov} we have 
\begin{theorem}\label{Key.}
If $(X,\omega)$ be a complete $d$-bounded \ka manifold, then
\begin{equation}\label{KeyPoint}\nonumber
\bar{Im \overline{\partial}}=Im \overline{\partial},\quad
\bar{Im \overline{\partial}^{*}}=Im \overline{\partial}^{*}.
\end{equation}
Moreover, $(X,\omega)$ admits the following fundamental $L^2$ Hodge theory:
\begin{enumerate}
  \item $L^2$ Hodge orthogonal decomposition :
$L^{p,q}_{(2)}(X,\omega)=\mathcal{H}_{(2)}^{p,q}(X,\omega)\oplus Im \overline{\partial}\oplus Im \overline{\partial}^{*}$
  \item $L^2$ Hodge isomorphism :
  $H_{(2)}^{p,q}(X,\omega)\cong\mathcal{H}_{(2)}^{p,q}(X,\omega)$
  \item denoting $\mathbb{H}$ to be the projection operator
  $\mathbb{H}: L^{p,q}_{(2)}(X,\omega)\rightarrow \mathcal{H}_{(2)}^{p,q}(X,\omega)$
then the Green operator $\mathbb{G}=(\Delta_{\overline{\partial}}|_{\mathcal{H}_{(2)}^{p,q}(X,\omega)^{\bot}})^{-1}(\mathbb{I}-\mathbb{H})$
is well-defined and bounded. And, we have
the following identity $$\Delta_{\overline{\partial}}\mathbb{G}=\mathbb{G}\Delta_{\overline{\partial}}=\mathbb{I}-\mathbb{H}, \mathbb{HG}=\mathbb{GH}=0$$
\end{enumerate}
\end{theorem}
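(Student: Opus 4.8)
The plan is to reduce the whole statement to the single analytic fact that $\overline{\partial}$ has closed range in $L^2$, and then to read off conclusions (1)--(3) by purely Hilbert-space arguments. First I would fix the operator-theoretic setup: since $(X,\omega)$ is complete, the maximal and minimal $L^2$-closed extensions of $\overline{\partial}$ coincide (Andreotti--Vesentini, Gaffney), so $\overline{\partial}\colon L^{p,q}_{(2)}\to L^{p,q+1}_{(2)}$ is a densely-defined closed operator with $\overline{\partial}^{2}=0$, $\overline{\partial}^{*}$ is its Hilbert adjoint, and $\Delta_{\overline{\partial}}=\overline{\partial}\,\overline{\partial}^{*}+\overline{\partial}^{*}\overline{\partial}$ is a nonnegative self-adjoint operator. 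For any such complex one always has the \emph{weak} Hodge--Kodaira decomposition
\begin{equation}\nonumber
L^{p,q}_{(2)}(X,\omega)=\mathcal{H}^{p,q}_{(2)}(X,\omega)\oplus\overline{\mathrm{Im}\,\overline{\partial}}\oplus\overline{\mathrm{Im}\,\overline{\partial}^{*}},
\end{equation}
where $\mathcal{H}^{p,q}_{(2)}=\ker\overline{\partial}\cap\ker\overline{\partial}^{*}=\ker\Delta_{\overline{\partial}}$, using $(\mathrm{Im}\,\overline{\partial})^{\perp}=\ker\overline{\partial}^{*}$. Thus the entire content of the theorem is to promote the closures $\overline{\mathrm{Im}\,\overline{\partial}}$, $\overline{\mathrm{Im}\,\overline{\partial}^{*}}$ to the ranges themselves; decomposition (1) then follows immediately from the weak one.

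The heart of the matter is a spectral-gap estimate, and this is where $d$-boundedness enters. Writing $L=\omega\wedge\cdot$ for the Lefschetz operator and $e(\theta)=\theta\wedge\cdot$, the elementary identity $d(\theta\wedge\alpha)=d\theta\wedge\alpha-\theta\wedge d\alpha$ yields the operator relation $L=d\,e(\theta)+e(\theta)\,d$, in which $e(\theta)$ is a \emph{bounded} operator because $\|\theta\|_{L^{\infty}(X)}<\infty$; moreover $L$ itself is bounded since $|\omega|_{g}$ is constant. Pointwise hard Lefschetz gives a uniform bound $\|\alpha\|\le C_{1}\|L^{n-k}\alpha\|$ in each total degree $k\le n$ (the constant $C_{1}$ being representation-theoretic and independent of the point because $\omega$ is parallel). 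Writing $L^{n-k}=d\,T+T\,d$ with $T$ bounded and pairing against $\alpha$, then using the K\"ahler commutation relations $[\Lambda,\overline{\partial}]=-\sqrt{-1}\,\partial^{*}$, $[\Lambda,\partial]=\sqrt{-1}\,\overline{\partial}^{*}$ (so that $[L,\Lambda]$ is multiplication by $n-p-q$) to move $d,d^{*}$ past $L$ at the cost of bounded zeroth-order terms, one arrives, for $\alpha$ in the domain with $\alpha\perp\mathcal{H}^{p,q}_{(2)}$, at a uniform Poincar\'e-type inequality
\begin{equation}\nonumber
\|\alpha\|^{2}\le C\bigl(\|\overline{\partial}\alpha\|^{2}+\|\overline{\partial}^{*}\alpha\|^{2}\bigr)=C\,\langle\Delta_{\overline{\partial}}\alpha,\alpha\rangle,
\end{equation}
with $C$ depending only on $n$ and $\|\theta\|_{L^{\infty}(X)}$, where I have used the K\"ahler relation $\Delta_{d}=2\Delta_{\overline{\partial}}$. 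This is exactly a positive lower bound $\mathrm{Spec}\bigl(\Delta_{\overline{\partial}}|_{(\mathcal{H}^{p,q}_{(2)})^{\perp}}\bigr)\ge 1/C>0$.

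Granting the gap, the rest is routine. On $(\mathcal{H}^{p,q}_{(2)})^{\perp}$ the self-adjoint $\Delta_{\overline{\partial}}$ is bounded below, hence invertible with bounded inverse, which produces the Green operator $\mathbb{G}=(\Delta_{\overline{\partial}}|_{(\mathcal{H}^{p,q}_{(2)})^{\perp}})^{-1}(\mathbb{I}-\mathbb{H})$ satisfying $\Delta_{\overline{\partial}}\mathbb{G}=\mathbb{G}\Delta_{\overline{\partial}}=\mathbb{I}-\mathbb{H}$ and $\mathbb{H}\mathbb{G}=\mathbb{G}\mathbb{H}=0$; this is statement (3). A self-adjoint operator with a spectral gap above $0$ has closed range, and for the $\overline{\partial}$-complex this is equivalent to closed range of both $\overline{\partial}$ and $\overline{\partial}^{*}$, giving $\overline{\mathrm{Im}\,\overline{\partial}}=\mathrm{Im}\,\overline{\partial}$ and $\overline{\mathrm{Im}\,\overline{\partial}^{*}}=\mathrm{Im}\,\overline{\partial}^{*}$. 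Substituting back into the weak decomposition yields the genuine orthogonal decomposition (1). Finally, closed range forces the reduced and unreduced $L^{2}$ Dolbeault cohomologies to coincide, and since reduced cohomology is always represented by harmonic forms, we obtain the Hodge isomorphism $H^{p,q}_{(2)}(X,\omega)\cong\mathcal{H}^{p,q}_{(2)}(X,\omega)$ of statement (2).

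The main obstacle is the spectral-gap estimate itself, i.e. Gromov's Theorem~1.4.A \cite{Gromov}. Two points demand care in the complete noncompact setting. First, every identity must be justified on the full $L^{2}$-domains rather than merely on smooth forms; completeness of $(X,\omega)$ is precisely what guarantees this, since it provides cutoff functions with uniformly bounded gradients, so that $C^{\infty}_{c}$ forms are dense in the graph norm of $\Delta_{\overline{\partial}}$ and the integrations by parts above produce no boundary contribution. Second, one must verify that the constant $C$ arising from $L=d\,e(\theta)+e(\theta)\,d$ depends only on the sup-norm of $\theta$ and not on the point of $X$; this uniformity, combined with completeness and the parallelism of $\omega$, is exactly the mechanism by which $d$-boundedness converts hard Lefschetz into a global spectral gap, and is the essential input we are invoking.
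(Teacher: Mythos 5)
Your proposal is correct and takes essentially the same route as the paper: both reduce the whole statement to Gromov's spectral-gap estimate (Theorem 1.4.A of \cite{Gromov}), which is exactly where $d$-boundedness and completeness enter, and then recover the closed ranges, the strong orthogonal decomposition, the Hodge isomorphism, and the bounded Green operator by standard Hilbert-space arguments for the self-adjoint $\Delta_{\overline{\partial}}$. The only difference is packaging: the paper quotes Gromov's inequality as a lemma and delegates the abstract functional analysis to Theorems 2.1--2.2 of \cite{Ohsawa}, whereas you sketch Gromov's Lefschetz/$e(\theta)$ argument (which, as you note, is the real content, and whose middle-degree case still ultimately rests on \cite{Gromov}) and carry out the Hilbert-space steps explicitly.
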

From Theorem \ref{Key.} we know that the $L^2$ Hodge theory holds for any complete $d$-bounded \ka manifold $(X,\omega)$. This is very important for us since it provides us a potential Hodge method as used in \cite{LRY,LZ} to solve the extension $\bp$-equation (\ref{holo.}) from the deformation theory of Kodaira-Spencer-Kuranishi with suitable $L^2$ estimates on $(X,\omega)$.  A complete $d$-bounded \ka manifold $(X,\omega)$ is not necessarily compact, which constitutes the main difficulty in our research and can makes almost everything different from the classical compact cases. In fact, to study the extension $\bp$-equation (\ref{holo.}) on the complete $d$-bounded \ka manifold $(X,\omega)$, we need to prove some basic $L^2$ estimates on $(X,\omega)$ by applying the Hopf-Rinow lemma in terms of the operators as presented in the $L^2$ Hodge theory. In particular, we obtain a quasi-isometry formula in $L^{2}$-norm with respect to the operator $\bp^*\G\p$.

\begin{theorem}\label{po.}
Let $(X,\omega)$ be a complete $d$-bounded \ka manifold of dimension $n$. Then for any $g\in L_{(2)}^{p-1,q}(X,\omega)$ with $\p g\in L^{p,q}_{(2)}(X,\omega)$, we have $$\|\bp^*\G\p
g\|^2\leq \|g\|^2.$$
\end{theorem}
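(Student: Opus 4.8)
The plan is to reduce the whole inequality to controlling the single scalar $\langle\p g,\G\p g\rangle$ and then to bound that by $\|g\|^2$, using throughout the $L^2$ Hodge package of Theorem \ref{Key.} together with the two \ka facts that $\Delta_{\p}=\Delta_{\bp}=:\Delta$ (so that $\G$ and $\mathbb H$ serve the $\p$- and $\bp$-theories simultaneously, with the same harmonic space) and that $\p$ commutes with $\Delta$, hence with $\G$ and $\mathbb H$.

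First I would record that $\mathbb H(\p g)=0$. Indeed, for any harmonic $\eta\in\mathcal H^{p,q}_{(2)}(X,\omega)$ one has $\p^*\eta=0$ because $\Delta_{\p}\eta=\Delta_{\bp}\eta=0$; integrating by parts (legitimate here because $(X,\omega)$ is complete, so one may insert cutoff functions with controlled gradient via the Hopf--Rinow lemma) gives $\langle\p g,\eta\rangle=\langle g,\p^*\eta\rangle=0$. Consequently $\Delta\G\p g=(\mathbb I-\mathbb H)\p g=\p g$. Next I would expand the target norm: writing $h:=\G\p g$ and using $\bp\bp^*=\Delta-\bp^*\bp$ together with the previous step, the key display is
\[
\|\bp^*\G\p g\|^2=\langle\bp\bp^* h,h\rangle=\langle\Delta h,h\rangle-\|\bp h\|^2=\langle\p g,\G\p g\rangle-\|\bp\G\p g\|^2\le\langle\p g,\G\p g\rangle .
\]

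So it remains to prove $\langle\p g,\G\p g\rangle\le\|g\|^2$. Here I would commute $\G$ past $\p$, integrate by parts once more, and use $\p^*\p\G g=\Delta\G g-\p\p^*\G g=(\mathbb I-\mathbb H)g-\p\p^*\G g$ to obtain
\[
\langle\p g,\G\p g\rangle=\langle g,\p^*\p\G g\rangle=\|g\|^2-\|\mathbb H g\|^2-\langle g,\p\p^*\G g\rangle .
\]
Finally, decomposing $g=\mathbb H g+\p\p^*\G g+\p^*\p\G g$ (which is just $g=\mathbb H g+\Delta\G g$) and invoking the mutual orthogonality of $\mathcal H^{p,q}_{(2)}$, $\mathrm{Im}\,\p$ and $\mathrm{Im}\,\p^*$, the cross terms drop out and $\langle g,\p\p^*\G g\rangle=\|\p\p^*\G g\|^2\ge0$. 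Combining the two displays yields $\langle\p g,\G\p g\rangle\le\|g\|^2$, and hence the claim.

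The routine algebra above hides the real work, which is entirely functional-analytic and noncompact in nature: justifying each integration by parts and each operator identity ($\mathbb H\p g=0$, $\Delta_{\p}=\Delta_{\bp}$ on the relevant $L^2$ domains, the commutation $\G\p=\p\G$, and the orthogonal splitting of $L^2$ forms) for genuine $L^2$ forms on a manifold that need not be compact. This is exactly where completeness is indispensable, since it is what lets me approximate by compactly supported forms and discard boundary contributions, and where the closed-range statements and the boundedness of $\G$ from Theorem \ref{Key.} are needed to guarantee that all the intermediate forms ($\G\p g$, $\p^*\G g$, $\p\p^*\G g$, and so on) lie in the correct domains. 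I expect the verification of these domain and integration-by-parts issues, rather than the displayed identities themselves, to be the main obstacle.
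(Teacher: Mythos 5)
Your proposal is correct and follows essentially the same route as the paper's own proof: both reduce $\|\bp^*\G\p g\|^2$ to $\la\la \p g,\G\p g\ra\ra-\|\bp\G\p g\|^2$ and then show $\la\la \p g,\G\p g\ra\ra=\|g\|^2-\|\mathbb{H}g\|^2-(\text{a nonnegative term})\le\|g\|^2$, relying on $\Delta_{\p}=\Delta_{\bp}$, the commutation $\G\p=\p\G$, the identity $\mathbb{H}\p g=0$, and Hopf--Rinow cutoff functions to justify the integrations by parts on the complete manifold. The only cosmetic differences are that the paper carries out explicitly the cutoff bookkeeping you defer (the $\psi_\nu$-estimates with $|d\psi_\nu|\le 2^{-\nu}$), and in the final nonnegativity step it writes $\la\la g,\p\p^*\G g\ra\ra=\la\la \p^*g,\p^*\G g\ra\ra\ge 0$ via approximation by compactly supported forms rather than invoking, as you do, the orthogonality of the $\p$-Hodge decomposition.
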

We recall some basic concepts on complex structures
and Beltrami differentials. Let M be a complex manifold of complex dimension $\dim M=n $. A Beltrami differential $\varphi$ is by definition a tangent bundle valued (0, 1)-form in $A^{0,1}(M, T^{1,0}M)$. If the Beltrami differential $\varphi$ is integrable in the sense that $$\bp\varphi=\frac{1}{2}[\varphi,\varphi]$$ then $\varphi$ determines a new complex structure on $M$, which is denoted by $M_{\varphi}$ in this paper.
By applying Theorem \ref{po.} we will study in detail the $L^2$ extension equation of holomorphic canonical forms on complete $d$-bounded \ka manifolds in Section \ref{Proof}. 
Let $(X,\omega)$ be a complete $d$-bounded \ka manifold of dimension $n$, $A_{(2)}^{p,q}(X,\omega)$ be the space of smooth $(p,q)$ forms $g$ on $X$
such that $g$ is $L^2$ global integrable with respect to $\omega$ and 
$\varphi\in A^{0,1}(X,T^{1,0}_X)$ be an integral Beltrami differential on $X$
such that its $L_\infty$-norm $\|\varphi\|_{\omega,\infty}$ with respect to $\omega$ is less than 1,
that is, $\|\varphi\|_{\omega,\infty}<1.$
Then for any $g\in A_{(2)}^{p,q}(X,\omega)$ we have
\begin{equation*}
  \|\varphi\lrcorner g\|_\omega\leq\|\varphi\|_{\omega,\infty}\|g\|_\omega\leq\|g\|_\omega<\infty.
\end{equation*}
Following the paper \cite{LZ} we consider the operator 
\begin{equation}\nonumber
T: L^{p,q}_{(2)}(X,\omega)\rightarrow L^{p+1,q-1}_{(2)}(X,\omega)
\end{equation}
defined by
\begin{equation}\nonumber
  T=\bp ^*\mathbb{G}\p.
\end{equation}
By Theorem \ref{po.} we will show that the operator $I+T\varphi$ is injective. It follows that the deformation operator 
$$\rho_{\omega, \varphi}: \mathcal{A}^{n,0}_{(2)}(X,\omega,\varphi)\rightarrow A^{n,0}(X_{\varphi})$$defined by 
\begin{equation}
\rho_{\omega, \varphi}(\Omega)=e^{\varphi}\lrcorner(I+T\varphi)^{-1}\Omega,\,\,\Omega\in\mathcal{A}^{n,0}_{(2)}(X,\omega,\varphi)\nonumber
\end{equation}
is well-defined, where
$$\mathcal{A}^{n,0}_{(2)}(X,\omega,\varphi):=
Im(I+T\varphi)\subset A^{n,0}_{(2)}(X,\omega).
$$
Finally, by using our $L^2$ estimates on the complete $d$-bounded \ka manifold $(X,\omega)$ and some analysis of the $L^2$ extension equation (\ref{holo..}), see the following section, we have the following main result about extensions of holomorphic canonical forms from the complex manifold $X$ to complex manifold $X_\varphi$.
\btheorem\label{.main}
Let $(X,\omega)$ be a complete $d$-bounded \ka manifold of dimension $n$ and $\varphi\in A^{0,1}(X,T^{1,0}_X)$
be an integral Beltrami differential on $X$ such that 
$\|\varphi\|_{\omega,\infty}<1$.
Then for any holomorphic $(n,0)$-form $\Omega$ in $\mathcal{A}^{n,0}_{(2)}(X,\omega,\varphi)$,
the expression $\rho_{\omega,\varphi}(\Omega)$
defines a holomorphic $(n,0)$-form on $X_\varphi$ with $\rho_{\omega,0}(\Omega)=\Omega$.\etheorem
We say that the canonical  holomorphic $(n,0)$-form $\rho_{\omega,\varphi}(\Omega)$ on $X_\varphi$ is a holomorphic extension of the canonical  holomorphic $(n,0)$-form $\Omega$ on $X$ in this paper. Theorem \ref{.main} tell us that, for any complete $d$-bounded \ka manifold $(X,\omega)$ of dimension $n$, there always exists a subspace $\mathcal{A}^{n,0}_{(2)}(X,\omega,\varphi)$ of  $A^{n,0}_{(2)}(X,\omega)$ such that every holomorphic $(n,0)$-form $\Omega$ in $\mathcal{A}^{n,0}_{(2)}(X,\omega,\varphi)$ admits a holomorphic extension $\rho_{\omega,\varphi}(\Omega)$ from $X$ to $X_{\varphi}$ with $\rho_{\omega,0}(\Omega)=\Omega$. This result generalizes Theorem 1.1 in \cite{LZ} from the compact to noncompact cases, which is closely related to a famous conjecture due to Siu \cite{Siu98, Siu}, about the invariance of plurigenera for compact \ka manifolds.

 \begin{ack}
The author would like to express his sincerely gratitude to Prof.
Kefeng Liu for continued support and interest on this work.
\end{ack}

\section{Proof of Theorem \ref{.main}}\label{Proof}
In this section we give a detailed proof of Theorem \ref{.main}.  
First we need the follow estimate from \cite{Gromov} which plays a key role in building the $L^2$-Hodge theory on complete $d$-bounded \ka manifolds.
\begin{lemma}[1.4.A. Theorem\cite{Gromov}]\label{KeyLemma}
Let $(X,\omega)$ be a complete $d$-bounded \ka manifold of dimension $n=2m$ and $\omega=d\eta$ where $\eta$ is a bounded $1$-form on $X$. Then every $L^2$-form $\psi$ on $X$ of degree $p\neq m$ satisfies the inequality 
\begin{equation}\label{Gro.}
\la\la\psi,\Delta\psi\ra\ra\geq\lambda^2\la\la\psi,\psi\ra\ra
\end{equation}
where $\lambda$ is strictly positive constant which depends only on $n=\dim X$ and the bound on $\eta$. Furthermore, inequality in the $($\ref{Gro.}$)$ is satisfied by the $L^2$-forms of degree $m$ which are orthogonal to the harmonic $m$-forms. Here every term in $($\ref{Gro.}$)$ is allowed to be infinity.
\end{lemma}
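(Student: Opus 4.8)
The plan is to derive the spectral-gap estimate from the Lefschetz $\mathfrak{sl}_2$-action on a \ka manifold together with the hypothesis that the \ka form has a \emph{bounded} primitive $\eta$; completeness enters only to legitimize the integrations by parts. First I would reduce to test forms. Since $(X,g)$ is complete, Hopf--Rinow furnishes cutoff functions $\chi_R\to1$ with $\|d\chi_R\|_{L^\infty}\to0$, whence (Gaffney) smooth compactly supported forms are dense in the form-domain of $\Delta=dd^*+d^*d$ and the integration by parts $\langle d\alpha,\beta\rangle=\langle\alpha,d^*\beta\rangle$ holds for $L^2$ forms in that domain with no boundary term. Because the asserted inequality is allowed to equal $+\infty$, it suffices to prove it for $\psi\in\Omega^p_c(X)$, for which $\lla\psi,\Delta\psi\rra=\|d\psi\|^2+\|d^*\psi\|^2$, and then pass to the limit.

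Next I would bring in the \ka structure. Let $L=\om\w\cdot$, $\Lambda=L^*$, and recall the pointwise identity $[L,\Lambda]=(p-m)\,\mathrm{Id}$ on $p$-forms, where $m=n/2$ is the complex dimension. The crucial input is $\om=d\eta$ with $\|\eta\|_{L^\infty}<\infty$: writing $e_\eta=\eta\w\cdot$ and $\iota_\eta=e_\eta^*$, the closedness of $\om$ gives the operator identities $L=d\,e_\eta+e_\eta\,d$ and, by adjunction, $\Lambda=d^*\iota_\eta+\iota_\eta\,d^*$, where $e_\eta,\iota_\eta$ are zeroth-order and pointwise bounded by $\|\eta\|_{L^\infty}$. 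For $p\neq m$ the factor $p-m$ is nonzero, so on degree-$p$ forms one has the operator identity
\[ \psi=\frac{1}{p-m}\,(L\Lambda-\Lambda L)\,\psi. \]

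The estimate for $p\neq m$ then follows by substituting these expressions for $L$ and $\Lambda$ into the right-hand side and invoking the Kähler commutation relations, which express $[\Lambda,d]$ and $[L,d^*]$ as bounded-coefficient combinations of $d^*$ and $d$, so as to move every exterior derivative onto $\psi$ itself. In this way each summand of $(L\Lambda-\Lambda L)\psi$ is of the form $d(\text{bounded}\cdot\psi)$, $d^*(\text{bounded}\cdot\psi)$, $(\text{bounded})\cdot d\psi$, or $(\text{bounded})\cdot d^*\psi$, the bounded factors being products of $e_\eta,\iota_\eta,L,\Lambda$ with norm controlled by $\|\eta\|_{L^\infty}$ and $m$. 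Pairing with $\psi$, integrating by parts, and applying Cauchy--Schwarz yields
\[ \|\psi\|^2\le\frac{C(m)\,\|\eta\|_{L^\infty}}{|p-m|}\,\big(\|d\psi\|+\|d^*\psi\|\big)\,\|\psi\|, \]
hence $\|\psi\|\le\frac{C(m)\|\eta\|_{L^\infty}}{|p-m|}\big(\|d\psi\|+\|d^*\psi\|\big)$; squaring gives (\ref{Gro.}) with $\lambda$ depending only on $m$ and $\|\eta\|_{L^\infty}$, as claimed.

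For the middle degree $p=m$ the identity degenerates, since $[L,\Lambda]=0$. Here I would use the primitive (Lefschetz) decomposition $\psi=\sum_{j\ge0}L^j\psi_j$ with $\psi_j$ primitive of degree $m-2j$: for $j\ge1$ the pieces $L^j\psi_j$ descend from degrees $m-2j\neq m$, where the previous estimate applies and is transported by the bounded operators $L,\Lambda$; the remaining primitive middle piece is controlled using the hypothesis $\psi\perp\mathcal H$ together with a spectral argument on $\mathcal H^\perp$, on which $\Delta$ is injective. I expect this middle-degree analysis, and the rigorous justification of the density and integration-by-parts steps on the noncompact complete manifold (Gaffney's theorem and the Hopf--Rinow cutoffs), to be the main points requiring care; the algebraic core --- the bounded-primitive Lefschetz trick reducing $\|\psi\|$ to $\|d\psi\|+\|d^*\psi\|$ --- is robust and dimension-independent.
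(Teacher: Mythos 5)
A preliminary remark on the comparison itself: the paper contains \emph{no proof} of this lemma --- it is imported verbatim as 1.4.A Theorem of Gromov's paper \cite{Gromov} and used as a black box --- so your attempt can only be measured against Gromov's original argument. For degrees $p\neq m$ your plan is essentially correct and runs parallel to Gromov's: both exploit $\om=d\eta$ through the homotopy identity $L=d\,e_\eta+e_\eta\,d$ (hence $\Lambda=d^*\iota_\eta+\iota_\eta d^*$), both use completeness only to justify the integrations by parts via cutoffs/Gaffney, and where Gromov gets the lower bound on $\|\psi\|$ from the pointwise hard Lefschetz isomorphism (bounded inverse of $L^{m-p}$), you get it from $[L,\Lambda]=(p-m)\mathrm{Id}$; these are interchangeable expressions of the same $\mathfrak{sl}_2$-structure. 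One imprecision you should repair: $[\Lambda,d]$ and $[L,d^*]$ are \emph{not} bounded-coefficient combinations of $d$ and $d^*$ --- they equal $\pm d^{c*}$ and $\pm d^{c}$ --- and the reason this is harmless is the \ka identity $\Delta_{d^c}=\Delta_d$, which gives $\|d^c\psi\|^2+\|d^{c*}\psi\|^2=\|d\psi\|^2+\|d^*\psi\|^2$ and lets you absorb those terms into $\la\la\psi,\Delta\psi\ra\ra^{1/2}$. With that stated, the $p\neq m$ estimate goes through.

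The genuine gap is the middle degree, and your sketch there would fail. The Lefschetz decomposition is a red herring: the pieces $L^j\psi_j$ with $j\geq1$ can indeed be controlled by the $p\neq m$ case, but the primitive piece $\psi_0$ of degree $m$ --- which carries the entire difficulty --- is untouched by it, and your substitute, ``a spectral argument on $\mathcal{H}^\perp$, on which $\Delta$ is injective,'' proves nothing: $\Delta|_{\mathcal{H}^\perp}$ is injective on \emph{every} complete Riemannian manifold by the definition of $\mathcal{H}$, and injectivity of a self-adjoint operator does not bound its spectrum away from $0$ (multiplication by $x$ on $L^2(0,1)$ is injective with no gap). The $d$-boundedness must enter, and it enters through the \emph{adjacent} degrees: for $\psi$ of degree $m$ in $\mathrm{Dom}(\Delta)$, the form $d\psi$ has degree $m+1$ and is $d$-closed, so the already-proved gap gives $\|d^*d\psi\|\geq\lambda\|d\psi\|$, and likewise $\|dd^*\psi\|\geq\lambda\|d^*\psi\|$; since the cross term $\la\la dd^*\psi,\,d^*d\psi\ra\ra$ vanishes on a complete manifold (cutoff argument), one gets
\begin{equation}\nonumber
\|\Delta\psi\|^2=\|dd^*\psi\|^2+\|d^*d\psi\|^2\;\geq\;\lambda^2\big(\|d\psi\|^2+\|d^*\psi\|^2\big)=\lambda^2\la\la\psi,\Delta\psi\ra\ra .
\end{equation}
Now the spectral theorem finishes it: if $\psi$ lies in the range of the spectral projection $E_{(0,\mu]}$ of $\Delta$ in degree $m$ with $\mu<\lambda^2$, then $\|\Delta\psi\|^2\leq\mu\,\la\la\psi,\Delta\psi\ra\ra$, which combined with the display forces $\la\la\psi,\Delta\psi\ra\ra=0$, hence $\psi\in\mathcal{H}\cap\mathcal{H}^\perp=\{0\}$; so $\mathrm{spec}(\Delta)\cap(0,\lambda^2)=\emptyset$ in degree $m$, which is exactly the ``furthermore'' clause. (Equivalently, the adjacent-degree gaps give closed range of $d$ and $d^*$ in degree $m$ and one argues on the strong Hodge--Kodaira decomposition.) Replace your middle-degree paragraph by this deduction and the proof is complete.
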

For any \ka manifold $(X,\omega)$ we denote by $L_{(2)}^{p,q}(X,\omega)$ the space of $(p,q)$ forms $\psi$ on $X$ such that $\psi$ is $L^2$ global integrable with respect to the metric $\omega$. As an important consequence of Lemma \ref{KeyLemma} we obtain 
\begin{theorem}[=Theorem \ref{Key.}]\label{Key}
If $(X,\omega)$ be a complete $d$-bounded \ka manifold, then we have
\begin{equation}\label{KeyPoint}\nonumber
\bar{Im \overline{\partial}}=Im \overline{\partial},\quad
\bar{Im \overline{\partial}^{*}}=Im \overline{\partial}^{*}.
\end{equation}
Moreover, $(X,\omega)$ admits the following fundamental $L^2$ Hodge theory:
\begin{enumerate}
  \item $L^2$ Hodge orthogonal decomposition :
$L^{p,q}_{(2)}(X,\omega)=\mathcal{H}_{(2)}^{p,q}(X,\omega)\oplus Im \overline{\partial}\oplus Im \overline{\partial}^{*},$
  \item $L^2$ Hodge isomorphism :
  $H_{(2)}^{p,q}(X,\omega)\cong\mathcal{H}_{(2)}^{p,q}(X,\omega).$
  \item denoting $\mathbb{H}$ to be the projection operator
  $\mathbb{H}: L^{p,q}_{(2)}(X,\omega)\rightarrow \mathcal{H}_{(2)}^{p,q}(X,\omega)$
then the Green operator $\mathbb{G}=(\Delta_{\overline{\partial}}|_{\mathcal{H}_{(2)}^{p,q}(X,\omega)^{\bot}})^{-1}(\mathbb{I}-\mathbb{H})$
is well-defined and bounded. And, we have
the following identity $$\Delta_{\overline{\partial}}\mathbb{G}=\mathbb{G}\Delta_{\overline{\partial}}=\mathbb{I}-\mathbb{H}, \mathbb{HG}=\mathbb{GH}=0.$$
\end{enumerate}
\end{theorem}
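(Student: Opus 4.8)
The plan is to extract from Lemma~\ref{KeyLemma} a \emph{uniform spectral gap} for the Laplacian on the orthogonal complement of the harmonic forms, and then to run the standard Hilbert-space Hodge machinery, the point being that completeness of $(X,\omega)$ removes all boundary terms. First I would pass from the de Rham Laplacian $\Delta$ in \eqref{Gro.} to the Dolbeault Laplacian $\Delta_{\bp}=\bp\bp^*+\bp^*\bp$ through the \ka identity $\Delta=2\Delta_{\bp}$, together with the degree bookkeeping. A $(p,q)$-form has total degree $p+q$. If $p+q$ differs from the middle degree $m$ of Lemma~\ref{KeyLemma}, then $\la\la\Delta\psi,\psi\ra\ra\geq\lambda^2\|\psi\|^2$ holds for every $L^2$ form of that degree; applying it to a harmonic $\psi$ forces $\psi=0$, so $\mathcal{H}_{(2)}^{p,q}(X,\omega)=0$ and the gap already holds on the whole space. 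If $p+q=m$, a $(p,q)$-form orthogonal to $\mathcal{H}_{(2)}^{p,q}(X,\omega)$ is automatically orthogonal to the harmonic forms of the other bidegrees (distinct bidegrees being pointwise orthogonal), hence orthogonal to all harmonic $m$-forms, so the second assertion of Lemma~\ref{KeyLemma} again applies. In all cases I obtain, for $\psi\perp\mathcal{H}_{(2)}^{p,q}(X,\omega)$,
\begin{equation}\nonumber
\|\bp\psi\|^2+\|\bp^*\psi\|^2=\la\la\Delta_{\bp}\psi,\psi\ra\ra\geq\frac{\lambda^2}{2}\|\psi\|^2 .
\end{equation}

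Next I would exploit the spectral content of this inequality. It says $\sigma(\Delta_{\bp})\subset\{0\}\cup[\lambda^2/2,\infty)$, where I use that on a complete manifold $\Delta_{\bp}$ is essentially self-adjoint on compactly supported smooth forms, so that $\psi\in\mathcal{H}_{(2)}^{p,q}(X,\omega)=\ker\Delta_{\bp}$ is equivalent to $\bp\psi=\bp^*\psi=0$ and all integrations by parts are boundary-free. Writing $\mathbb{H}$ for the orthogonal projection onto $\mathcal{H}_{(2)}^{p,q}(X,\omega)$, the restriction of $\Delta_{\bp}$ to the invariant subspace $\mathcal{H}_{(2)}^{p,q}(X,\omega)^{\bot}$ is bounded below by $\lambda^2/2$, hence a bijection of that subspace onto itself with bounded inverse of norm $\leq 2/\lambda^2$; extending the inverse by $0$ on $\mathcal{H}_{(2)}^{p,q}(X,\omega)$ gives the Green operator $\mathbb{G}$, which is therefore well-defined and bounded. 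The identities $\Delta_{\bp}\mathbb{G}=\mathbb{G}\Delta_{\bp}=\mathbb{I}-\mathbb{H}$ and $\mathbb{H}\mathbb{G}=\mathbb{G}\mathbb{H}=0$ then read off directly from this construction, which is assertion (3); in particular $Im\,\Delta_{\bp}=\mathcal{H}_{(2)}^{p,q}(X,\omega)^{\bot}$ is closed.

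Finally I would deduce the decomposition and the closed-range statements simultaneously. For any $\alpha\in L_{(2)}^{p,q}(X,\omega)$ the identity $\Delta_{\bp}\mathbb{G}=\mathbb{I}-\mathbb{H}$ gives
\begin{equation}\nonumber
\alpha=\mathbb{H}\alpha+\Delta_{\bp}\mathbb{G}\alpha=\mathbb{H}\alpha+\bp\big(\bp^*\mathbb{G}\alpha\big)+\bp^*\big(\bp\mathbb{G}\alpha\big),
\end{equation}
exhibiting $\alpha$ as a genuine sum of a harmonic form, an element of $Im\,\bp$, and an element of $Im\,\bp^*$. The three subspaces are mutually orthogonal by the usual computation ($\mathcal{H}_{(2)}^{p,q}(X,\omega)\perp Im\,\bp$ since harmonic forms are $\bp^*$-closed, and $Im\,\bp\perp Im\,\bp^*$ since $\la\la\bp a,\bp^*b\ra\ra=\la\la\bp\bp a,b\ra\ra=0$), so $L_{(2)}^{p,q}(X,\omega)$ is their orthogonal direct sum, which is assertion (1). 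Each summand is then the orthogonal complement of the sum of the other two; in particular $Im\,\bp=\big(\mathcal{H}_{(2)}^{p,q}(X,\omega)\oplus Im\,\bp^*\big)^{\bot}$ and $Im\,\bp^*=\big(\mathcal{H}_{(2)}^{p,q}(X,\omega)\oplus Im\,\bp\big)^{\bot}$ are closed, giving $\overline{Im\,\bp}=Im\,\bp$ and $\overline{Im\,\bp^*}=Im\,\bp^*$. Assertion (2) follows because a short check shows $\ker\bp=\mathcal{H}_{(2)}^{p,q}(X,\omega)\oplus Im\,\bp$, whence $H_{(2)}^{p,q}(X,\omega)=\ker\bp/\overline{Im\,\bp}\cong\mathcal{H}_{(2)}^{p,q}(X,\omega)$.

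I expect the main obstacle to be the noncompactness rather than the algebra: one must justify that completeness of $(X,\omega)$ yields essential self-adjointness of $\Delta_{\bp}$ and the vanishing of boundary terms, so that harmonicity is equivalent to simultaneous $\bp$- and $\bp^*$-closedness and the abstract spectral argument is legitimate, and one must transport the pure total-degree hypothesis of Lemma~\ref{KeyLemma} correctly onto each fixed bidegree $(p,q)$. Once the uniform spectral gap and the essential self-adjointness are secured, the remainder is formal Hilbert-space Hodge theory.
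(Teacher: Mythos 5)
Your proposal is correct, and it starts from the same key input as the paper, namely Gromov's inequality (Lemma \ref{KeyLemma}); the two arguments differ in how they convert that inequality into the full Hodge package. The paper's own proof is very short: it combines the identity $\la\la\psi,\Delta\psi\ra\ra=\|\bp\psi\|^{2}+\|\bp^{*}\psi\|^{2}$ (from Gromov's 1.1.B Lemma) with Lemma \ref{KeyLemma} to obtain the a priori estimate $\|\psi\|\leq\lambda^{-1}\bigl(\|\bp\psi\|+\|\bp^{*}\psi\|\bigr)$ for all $\psi\in \text{Dom}\,\bp\cap\text{Dom}\,\bp^{*}\cap(\Ker\bp\cap\Ker\bp^{*})^{\perp}$, and then simply cites the abstract Hilbert-space theorems of Ohsawa (Theorems 2.1 and 2.2 in Chapter 2.1.2 of \cite{Ohsawa}), which state that exactly such an estimate for a pair of densely defined closed operators yields closed ranges, the three-term orthogonal decomposition, the Hodge isomorphism, and the bounded Green operator. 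You instead rebuild that abstract machinery by hand: essential self-adjointness of $\Delta_{\bp}$ on a complete manifold, the resulting spectral gap $\sigma(\Delta_{\bp})\subset\{0\}\cup[\lambda^{2}/2,\infty)$ on $\mathcal{H}_{(2)}^{p,q}(X,\omega)^{\perp}$, the Green operator as the bounded inverse of the restricted Laplacian extended by zero, and the decomposition read off from $\mathbb{I}-\mathbb{H}=\Delta_{\bp}\mathbb{G}$, with closedness of $Im\,\bp$ and $Im\,\bp^{*}$ deduced from the fact that each summand of an orthogonal decomposition is the orthogonal complement of the sum of the other two. Your route buys two things the paper glosses over: a careful passage from Gromov's pure total-degree hypothesis to a fixed bidegree $(p,q)$ (via $\Delta=2\Delta_{\bp}$ and pointwise orthogonality of distinct bidegrees, so that a $(p,q)$-form orthogonal to $\mathcal{H}_{(2)}^{p,q}(X,\omega)$ is orthogonal to \emph{all} harmonic forms of middle degree), and the observation that off the middle degree the $L^{2}$ harmonic space vanishes outright. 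What the paper's route buys is brevity and the avoidance of any explicit appeal to essential self-adjointness, since Ohsawa's theorems are formulated directly in terms of the a priori estimate. The only caveat in your write-up is the one you yourself flag: Gaffney-type essential self-adjointness and the boundary-free integrations by parts on a complete manifold are standard but must genuinely be imported; with those granted, your argument is complete.
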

\begin{proof}
From the arguments in the proof of l.l.B.Lemma \cite{Gromov} we have 
\begin{equation}\nonumber
\la\la\psi,\Delta\psi\ra\ra=\la\la\bar\partial\psi,\bar\partial\psi\ra\ra+\la\la\bar\partial^*\psi,\bar\partial^*\psi\ra\ra
=\|\bar\partial\psi\|^2+\|\bar\partial^*\psi\|^2
\end{equation}
for any $\psi\in \text{Dom}\overline{\partial}\cap \text{Dom}\bar\partial^*\cap(\Ker\bp\cap\Ker\bp^*)^{\perp}$.
It follows from Lemma \ref{KeyLemma} that 
\begin{equation}\nonumber
\begin{split}
\lambda^2\|\psi\|^2&=\lambda^2\la\la\psi,\psi\ra\ra\\
&\leq\la\la\psi,\Delta\psi\ra\ra\\
&=\|\bar\partial\psi\|^2+\|\bar\partial^*\psi\|^2\\
&\leq(\|\bar\partial\psi\|+\|\bar\partial^*\psi\|)^2
\end{split}
\end{equation}
which yields that
\begin{equation}\nonumber
\|\psi\|\leq\lambda^{-1}(\|\bar\partial\psi\|+\|\bar\partial^*\psi\|)
\end{equation}
for any $\psi\in \text{Dom}\overline{\partial}\cap \text{Dom}\bar\partial^*\cap(\Ker\bp\cap\Ker\bp^*)^{\perp}$. 
Then Theorem \ref{Key} follows from
Theorem 2.1 and Theorem 2.2 in Chapter 2.1.2 of \cite{Ohsawa},
pages 42-43.
\end{proof}
Let $(X,\omega)$ be a complete $d$-bounded \ka manifold of dimension $n$. We will prove some basic $L^2$ estimates on $(X,\omega)$ in terms of the operators as in Theorem \ref{Key}, in particular a quasi-isometry formula, that is, Theorem \ref{po} as below,
in $L^{2}$-norm with respect to the operator $\bp^*\G\p$.
As one application, we will see that these estimates give a rather
simple and explicit way to solve some $\bp$-equations with suitable $L^2$-estimates on $(X,\omega)$, see Theorem \ref{eq}.
As another application of our estimates, we will apply these estimates to studying the $L^2$ extension equation
of holomorphic canonical forms on the complete $d$-bounded \ka manifold $(X,\omega)$ as follows. To begin, we need the following Hopf-Rinow lemma, 
which is very effective when we deal with $L^2$ estimates on complete manifolds.

\blemma [cf. page 366 in \cite{Demailly02}]\label{qsef.}
The following properties are equivalent:
\begin{itemize}
  \item [$(a)$] $(M, g)$ is complete;
  \item [$(b)$] there exists an exhaustive sequence $\{K_\nu\}_{\nu\in \mathbb{N}}$ of compact subsets of $M$ and functions
 $\psi_\nu\in C^\infty(M,\mathbb{R})$ such that
  $\psi_\nu=1$ in a neighborhood of $K_\nu$, $\text{Supp}\psi_\nu\subset  K^\circ_{\nu+1}$,
$ 0\leq \psi_\nu\leq 1$ and $|d\psi_\nu|_g \leq 2^{-\nu}$.
\end{itemize}
\elemma

By applying Lemma \ref{qsef.} we obtain
\blemma \label{fdsa}
Let $(X,\omega)$ be a complete $d$-bounded \ka manifold of dimension $n$. Then for any $g\in L_{(2)}^{p,q}(X,\omega)$ with $q\geq1$ 
$$\|\bp ^*\mathbb{G}g\|^2\leq \la\la  g, \mathbb{G} g\ra\ra\leq \|\mathbb{G}\|\cdot\|g\|^2.$$
\elemma
\bproof
Since $\|\bp ^*\mathbb{G}g\|^2=\lim_{\nu\rightarrow\infty}\|\psi_\nu\bp ^*\mathbb{G}g\|^2$ and
\begin{equation}
\begin{split}
 \|\psi_\nu\bp ^*\mathbb{G}g\|^2&=\lim_{\nu\rightarrow\infty}\la\la \psi_\nu^2 \bp ^*\mathbb{G}g, \bp ^*\mathbb{G}g\ra\ra\\
  &=\lim_{\nu\rightarrow\infty}\la\la \bp(\psi_\nu^2 \bp ^*\mathbb{G}g), \mathbb{G}g\ra\ra\\
  &=\lim_{\nu\rightarrow\infty}\la\la 2\psi_\nu\bp\psi_\nu\wedge \bp ^*\mathbb{G}g, \mathbb{G}g\ra\ra+\lim_{\nu\rightarrow\infty}\la\la \psi_\nu^2 \bp\bp ^*\mathbb{G}g, \mathbb{G}g\ra\ra\nonumber
\end{split}
\end{equation}
it follows that
\begin{equation}\label{qw}
  \lim_{\nu\rightarrow\infty}\|\psi_\nu\bp ^*\mathbb{G}g\|^2=\lim_{\nu\rightarrow\infty} \la\la \psi_\nu^2 \bp\bp ^*\mathbb{G}g, \mathbb{G}g\ra\ra.
\end{equation}
Here we used the fact that
\begin{equation}
\begin{split}
 \lim_{\nu\rightarrow\infty}|\la\la 2\psi_\nu\bp\psi_\nu\wedge \bp ^*\mathbb{G}g, \mathbb{G}g\ra\ra|&\leq
 \lim_{\nu\rightarrow\infty}\| \bp\psi_\nu\|\| 2\psi_\nu\bp ^*\mathbb{G}g\|\| \mathbb{G}g\|\\
 &\leq\lim_{\nu\rightarrow\infty}\| \bp\psi_\nu\|\| 2\bp ^*\mathbb{G}g\|\| \mathbb{G}g\|=0.\nonumber
\end{split}
\end{equation}
We compute
\begin{equation}\label{aaty}
\begin{split}
\lim_{\nu\rightarrow\infty}\la\la \psi_\nu^2 \bp\bp ^*\mathbb{G} g, \mathbb{G} g\ra\ra)
&=\lim_{\nu\rightarrow\infty}\la\la \psi_\nu^2 (\Delta_{\overline{\partial}} \mathbb G-\bp ^*\bp\mathbb{G}) g, \mathbb{G} g\ra\ra\\
&=\lim_{\nu\rightarrow\infty}\la\la \psi_\nu^2 (\mathbb{I}-\mathbb H-\bp ^*\bp\mathbb{G}) g, \mathbb{G} g\ra\ra\\
&=\lim_{\nu\rightarrow\infty}\la\la \psi_\nu^2  g, \mathbb{G} g\ra\ra-\lim_{\nu\rightarrow\infty}\la\la \psi_\nu^2 \mathbb H g, \mathbb{G} g\ra\ra
-\lim_{\nu\rightarrow\infty}\la\la \psi_\nu^2 \bp ^*\bp\mathbb G g, \mathbb{G} g\ra\ra\\
&=\la\la   g, \mathbb{G} g\ra\ra-\la\la  \mathbb H g, \mathbb{G} g\ra\ra
-\lim_{\nu\rightarrow\infty}\la\la \psi_\nu^2 \bp ^*\bp\mathbb G g, \mathbb{G} g\ra\ra
\end{split}
\end{equation}
Note that $\la\la  \mathbb H g, \mathbb{G} g\ra\ra=\la\la  \mathbb{G}\mathbb H g,  g\ra\ra=0$
since the Green operator is self-adjoint and zero on the kernel of
Laplacian by definition. Moreover,
\begin{equation}
  \begin{split}
   \la\la \psi_\nu^2 \bp ^*\bp\mathbb G g, \mathbb{G} g\ra\ra&=\la\la  \bp ^*\bp\mathbb G g, \psi_\nu^2\mathbb{G}g\ra\ra\\
   &=\la\la  \bp\mathbb G g, \bp (\psi_\nu^2\mathbb{G} g)\ra\ra\\
   &=\la\la  \bp\mathbb G g, 2\psi_\nu\bp\psi_\nu\wedge\mathbb{G} g\ra\ra+
   \la\la  \bp\mathbb G g, \psi_\nu^2\bp\mathbb{G} g\ra\ra.\nonumber
  \end{split}
\end{equation}
But
\begin{equation}
\begin{split}
  |\la\la  \bp\mathbb G g, 2\psi_\nu\bp\psi_\nu\wedge\mathbb{G} g\ra\ra|&\leq\|\bp\psi_\nu\| \|2\psi_\nu\bp\mathbb G g\| \|\mathbb{G} g\|\\
 &\leq\|\bp\psi_\nu\| \|2\bp\mathbb G g\| \|\mathbb{G} g\|\\
 &\leq2^{-\nu} \|2\bp\mathbb G g\| \|\mathbb{G} g\|\rightarrow0.\nonumber
\end{split}
\end{equation}
Therefore
\begin{equation}\label{aaio}
 \lim_{\nu\rightarrow\infty}\la\la \psi_\nu^2 \bp ^*\bp\mathbb G g, \mathbb{G} g\ra\ra=\|\bp\mathbb{G} g\|^2. \nonumber
\end{equation}
It follows from (\ref{qw}) and (\ref{aaty}) that
\begin{equation}
  \begin{split}
    \|\bp ^*\mathbb{G}g\|^2&=\lim_{\nu\rightarrow\infty}\|\psi_\nu\bp ^*\mathbb{G}g\|^2
    =\lim_{\nu\rightarrow\infty} \la\la \psi_\nu^2 \bp\bp ^*\mathbb{G}g, \mathbb{G}g\ra\ra\\
    &=\la\la   g, \mathbb{G} g\ra\ra-\|\bp\mathbb{G} g\|^2
    \leq \la\la   g, \mathbb{G} g\ra\ra \nonumber
    \end{split}
\end{equation}
This completes the proof.
\eproof

As an easy application of Theorem \ref{Key} and Lemma \ref{fdsa}, we have
\btheorem \label{eq}
Let $(X,\omega)$ be a complete $d$-bounded \ka manifold of dimension $n$. Then for any $g\in L_{(2)}^{p-1,q}(X,\omega)$ with $\p g\in L^{p,q}_{(2)}(X,\omega)$
and $q\geq1$, the differential form
$$s = {\bp}^* \mathbb{G} \p g\in L^{p,q-1}_{(2)}(X,\omega)$$ gives a solution to the equation
\begin{equation}\label{dfmt}
 {\bp } s =\p g
\end{equation}
with $\bp  \p g =0$ and $$\|s\|^2\leq \la\la
\p g, \G  \p g\ra\ra\leq\|G\|\|\p g\|^2.$$ This solution is unique if we require
$\mathbb{H}(s)=0$ and $\overline{\partial}^{\ast}s=0$.
\etheorem

\begin{proof}
By the $L^2$ Hodge decomposition, see Theorem \ref{Key}, we have
$$\bp  s = {\bp } {\bp }^* \mathbb{G} \p g
= \p g-\mathbb{H}\p g-{\bp }^* {\bp } \mathbb{G} \p g=
\p g-\mathbb{H}\p g= \p g,$$ where we used the identity
$\mathbb{H}\p g=0$. The estimate in Theorem \ref{eq} follows from Lemma \ref{fdsa}.
The uniqueness of this solution is obvious.  In fact, if $s_1$
and $s_2$ are two solutions to $\bp s=\p  g$ with
$\H(s_1)=\H(s_2)=0$ and $\bp^*s_1=\bp^* s_2=0$, by setting
$\eta=s_1-s_2$, we see $\bp\eta =0$, $\H(\eta)=0$ and $\bp^*\eta=0$.
Therefore,
\begin{align*}
\eta=\mathbb{H}(\eta)+\Delta_{\overline{\partial}}\mathbb{G}(\eta)
=\mathbb{H}(\eta)+(\overline
{\partial}\overline{\partial}^{*}+\overline{\partial}^{*}\overline{\partial
})\mathbb{G}(\eta) =0,
\end{align*}
as desired.
\end{proof}

We remark that such kind of $\bp$-equation as in formula (\ref{dfmt}) is very important
in the study of the holomorphic deformation theory of Kodaira-Spencer-Kuranishi
since its solution can be used to construct holomorphic deformations
of complex structures (cf. \cite{GLT,MK,LSY,[T],[To89]}).
Theorem \ref{eq} gives a rather simple and explicit way to solve
this kind of $\bp$-equation with suitable $L^2$-estimates on complete \ka manifolds,
which generalizes Proposition 2.3 in \cite{LRY}.

Next by using Lemma \ref{qsef.} we have the following quasi-isometry formula
in $L^{2}$-norm with respect to the operator $\bp^*\G\p$.
We will apply it to studying the $L^2$ extension equation, from the deformation theory of Kodaira-Spencer-Kuranishi, of holomorphic canonical forms on complete $d$-bounded \ka manifolds.

\begin{theorem}[=Theorem \ref{po.}]\label{po}
Let $(X,\omega)$ be a complete $d$-bounded \ka manifold of dimension $n$. Then for any $g\in L_{(2)}^{p-1,q}(X,\omega)$ with $\p g\in L^{p,q}_{(2)}(X,\omega)$, we have $$\|\bp^*\G\p
g\|^2\leq \|g\|^2.$$
\end{theorem}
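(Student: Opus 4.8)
The plan is to reduce the statement to the already-proved Lemma \ref{fdsa} and then to control the resulting quadratic expression $\la\la\p g,\G\p g\ra\ra$ by $\|g\|^2$ through a \ka--Hodge computation. If $q=0$ the operator $\bp^*$ annihilates the $(p,0)$-form $\G\p g$ and the inequality is trivial, so I assume $q\geq1$. Applying Lemma \ref{fdsa} to the $(p,q)$-form $h=\p g\in L^{p,q}_{(2)}(X,\omega)$ gives immediately
$$\|\bp^*\G\p g\|^2\leq\la\la\p g,\G\p g\ra\ra,$$
so the whole theorem comes down to the sharp bound $\la\la\p g,\G\p g\ra\ra\leq\|g\|^2$, in which the factor $\|\G\|$ appearing in Theorem \ref{eq} is improved to $1$.

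The first step is to identify $\G\p g$ more explicitly. Since $(X,\omega)$ is \ka, the local \ka identities give $\Delta_{\bp}=\Delta_{\p}=\p\p^*+\p^*\p$, so the $\bp$-harmonic space $\mathcal{H}^{p,q}_{(2)}(X,\omega)$ coincides with the space of $L^2$ forms annihilated by $\Delta_\p$; by completeness every such $h$ satisfies $\p h=\p^* h=0$. Consequently, for each harmonic $h$,
$$\la\la\p g,h\ra\ra=\la\la g,\p^* h\ra\ra=0,$$
that is, $\mathbb{H}(\p g)=0$. Setting $v:=\G\p g$ and using the identity $\Delta_{\bp}\G=\mathbb{I}-\mathbb{H}$ from Theorem \ref{Key}, I obtain
$$\Delta_\p v=\Delta_{\bp}v=(\mathbb{I}-\mathbb{H})\p g=\p g.$$

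The second step is the key estimate. Arguing with the exhaustion $\{\psi_\nu\}$ of Lemma \ref{qsef.} exactly as in the proof of Lemma \ref{fdsa}, completeness of $(X,\omega)$ lets me integrate by parts with no boundary contribution and obtain
$$\la\la\Delta_\p v,v\ra\ra=\|\p v\|^2+\|\p^* v\|^2,$$
while the same cutoff argument gives the adjoint relation $\la\la\p g,v\ra\ra=\la\la g,\p^* v\ra\ra$. Combining these with $\Delta_\p v=\p g$ yields
$$\|\p v\|^2+\|\p^* v\|^2=\la\la\p g,v\ra\ra=\la\la g,\p^* v\ra\ra\leq\|g\|\,\|\p^* v\|.$$
Dropping the nonnegative term $\|\p v\|^2$ gives $\|\p^* v\|\leq\|g\|$, and feeding this back into the Cauchy--Schwarz bound produces
$$\la\la\p g,\G\p g\ra\ra=\la\la g,\p^* v\ra\ra\leq\|g\|\,\|\p^* v\|\leq\|g\|^2,$$
which with the first displayed inequality completes the proof.

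The routine part is the reduction via Lemma \ref{fdsa} and the formal manipulation of the \ka identities. The main obstacle, and where the hypotheses are genuinely used, is the analytic justification on the noncompact manifold: one must verify that $v=\G\p g$ lies in the domains of both $\p$ and $\p^*$ (so that $\Delta_\p v$ may be split as above), and that all integrations by parts --- both in $\la\la\Delta_\p v,v\ra\ra=\|\p v\|^2+\|\p^* v\|^2$ and in $\la\la\p g,v\ra\ra=\la\la g,\p^* v\ra\ra$ --- carry no error in the limit. This is handled by inserting the cutoffs $\psi_\nu^2$ and using $|d\psi_\nu|_g\leq2^{-\nu}$ together with the boundedness of $\G$ to kill the commutator terms, precisely the mechanism already exploited in Lemma \ref{fdsa}; completeness enters through Lemma \ref{qsef.} to supply these cutoffs.
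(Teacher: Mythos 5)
Your proof is correct, and it takes a genuinely different route from the paper's. The paper attacks the norm head-on: it estimates $\|\psi_\nu\bp^*\G\p g\|^2$ with the cutoffs of Lemma \ref{qsef.}, expands $\bp\bp^*\G\p g$ via the Hodge identity as in (\ref{ty}), and then integrates by parts a second time inside $\la\la \psi_\nu^2\p g,\G\p g\ra\ra$; that second step forces it to commute $\G$ past $\p$ (writing $\p^*\G\p g=g-\mathbb{H}g-\p\p^*\G g$) and to handle a pairing $\la\la \p^*g,\p^*\G g\ra\ra$, whose treatment tacitly assumes $\p^*g\in L^2$ (not among the hypotheses) and whose nonnegativity itself needs $\G$ to commute with $\p^*$. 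You instead quote Lemma \ref{fdsa} for the reduction $\|\bp^*\G\p g\|^2\leq\la\la\p g,\G\p g\ra\ra$ and then prove the sharp bound $\la\la\p g,\G\p g\ra\ra\leq\|g\|^2$: since $\mathbb{H}\p g=0$ and $\Delta_\p=\Delta_{\bp}$ (the same K\"ahler identity the paper invokes at (\ref{nnas})), the form $v=\G\p g$ solves $\Delta_\p v=\p g$, and Gaffney's cutoff identity $\la\la\Delta_\p v,v\ra\ra=\|\p v\|^2+\|\p^*v\|^2$ combined with $\la\la\p g,v\ra\ra=\la\la g,\p^*v\ra\ra$ and Cauchy--Schwarz gives $\|\p^*v\|\leq\|g\|$, hence the claim. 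Your route is shorter, never commutes $\G$ with $\p$ or $\p^*$, and never differentiates $g$ in the $\p^*$ direction, so it sidesteps the domain issue just noted; as a by-product, $\la\la\p g,\G\p g\ra\ra\leq\|g\|^2$ sharpens the estimate $\la\la\p g,\G\p g\ra\ra\leq\|\G\|\,\|\p g\|^2$ appearing in Theorem \ref{eq}. What the paper's longer computation buys is the more precise conclusion $\|\bp^*\G\p g\|^2\leq\|g\|^2-\|\bp\G\p g\|^2$, which exhibits exactly which nonnegative terms are discarded; your argument encodes the same loss in $\|\p v\|^2+\|\p^*v\|^2\leq\|g\|\,\|\p^*v\|$. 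Two points you should state explicitly rather than gesture at: (i) $v$ lies in the Gaffney domain of $\Delta_\p$ (so that the splitting of $\la\la\Delta_\p v,v\ra\ra$ and the relation $\la\la\p g,v\ra\ra=\la\la g,\p^*v\ra\ra$ are legitimate), which is precisely the paper's (\ref{ssjk})--(\ref{nnas}) since $\G$ maps into $\text{Dom}\,\Delta_{\bp}=\text{Dom}\,\Delta_\p$; and (ii) Lemma \ref{fdsa} requires $q\geq1$, which you correctly dispose of by observing that the case $q=0$ is trivial because $\bp^*$ annihilates $(p,0)$-forms.
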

\bproof
First, for any $g\in L_{(2)}^{p-1,q}(X,\omega)$ with $\p g\in L^{p,q}_{(2)}(X,\omega)$, we find
$\Delta_{\overline{\partial}} \mathbb G \p g=\mathbb I \p g-\mathbb{H}\p g\,\,\in L^{p,q}_{(2)}(X,\omega),$
in particular,
\begin{equation}\label{ssjk}
  \mathbb G \p g\,\,\in \text{Dom}\Delta_{\overline{\partial}}\subset\text{Dom}\bp\cap \text{Dom}\bp^*
\end{equation}
and
\begin{equation}\label{nnas}
  \mathbb G \p g\,\,\in \text{Dom}\Delta_{\overline{\partial}}
=\text{Dom}\Delta_{\partial}\subset\text{Dom}\p\cap \text{Dom}\p^*.
\end{equation}
Next, we notice that \begin{equation}
\begin{split}
 \|\psi_\nu\bp ^*\mathbb{G}\p g\|^2&=\la\la \psi_\nu^2 \bp ^*\mathbb{G}\p g, \bp ^*\mathbb{G}\p g\ra\ra\\
  &=\la\la \bp(\psi_\nu^2 \bp ^*\mathbb{G}\p g), \mathbb{G}\p g\ra\ra\\
  &=\la\la 2\psi_\nu\bp\psi_\nu\wedge \bp ^*\mathbb{G}\p g, \mathbb{G}\p g\ra\ra+\la\la \psi_\nu^2 \bp\bp ^*\mathbb{G}\p g, \mathbb{G}\p g\ra\ra\\
  &\leq 2^{-\nu} \| 2\psi_\nu\bp ^*\mathbb{G}\p g\|\| \mathbb{G}\p g\|+\la\la \psi_\nu^2 \bp\bp ^*\mathbb{G}\p g, \mathbb{G}\p g\ra\ra\\
  &\leq 2^{-\nu} \big(\| \psi_\nu\bp ^*\mathbb{G}\p g\|^2+\| \mathbb{G}\p g\|^2\big)+\la\la \psi_\nu^2 \bp\bp ^*\mathbb{G}\p g, \mathbb{G}\p g\ra\ra\\
  &\leq 2^{-\nu} \| \psi_\nu\bp ^*\mathbb{G}\p g\|^2+2^{-\nu}\| \mathbb{G}\p g\|^2+\la\la \psi_\nu^2 \bp\bp ^*\mathbb{G}\p g, \mathbb{G}\p g\ra\ra.\nonumber
  \end{split}
\end{equation}
Thus we obtain
\begin{equation}\label{er}
  \|\psi_\nu\bp ^*\mathbb{G}\p g\|^2\leq\frac{1}{1-2^{-\nu}}\Big(2^{-\nu}\| \mathbb{G}\p g\|^2+
  \la\la \psi_\nu^2 \bp\bp ^*\mathbb{G}\p g, \mathbb{G}\p g\ra\ra\Big).
\end{equation}
In the following we give a detailed estimate of the term
$\la\la \psi_\nu^2 \bp\bp ^*\mathbb{G}\p g, \mathbb{G}\p g\ra\ra$ in the above inequality
by using the $L^2$ Hodge decomposition. First we compute
\begin{equation}\label{ty}
\begin{split}
\la\la \psi_\nu^2 \bp\bp ^*\mathbb{G}\p g, \mathbb{G}\p g\ra\ra)
&=\la\la \psi_\nu^2 (\Delta_{\overline{\partial}} \mathbb G-\bp ^*\bp\mathbb{G})\p g, \mathbb{G}\p g\ra\ra\\
&=\la\la \psi_\nu^2 (\mathbb{I}-\mathbb H-\bp ^*\bp\mathbb{G})\p g, \mathbb{G}\p g\ra\ra\\
&=\la\la \psi_\nu^2 (\p g-\bp ^*\bp\mathbb{G}\p g), \mathbb{G}\p g\ra\ra\\
&=\la\la \psi_\nu^2 \p g, \mathbb{G}\p g\ra\ra-\la\la \psi_\nu^2 \bp ^*\bp\mathbb G\p g, \mathbb{G}\p g\ra\ra.
\end{split}
\end{equation}
On one hand,
\begin{equation}
  \begin{split}
  \la\la \psi_\nu^2 \p g, \mathbb{G}\p g\ra\ra&=\la\la \p(\psi_\nu^2 g)-2\psi_\nu \p \psi_\nu\wedge g, \mathbb{G}\p g\ra\ra\\
  &=\la\la \psi_\nu^2 g, \p^*\mathbb{G}\p g\ra\ra-\la\la 2\psi_\nu \p \psi_\nu\wedge g, \mathbb{G}\p g\ra\ra\\
  &=\la\la \psi_\nu^2 g, g-\mathbb{H}g-\p\p^*\mathbb{G} g\ra\ra-\la\la 2\psi_\nu \p \psi_\nu\wedge g, \mathbb{G}\p g\ra\ra\\
  &=\la\la \psi_\nu^2 g, g\ra\ra-\la\la \psi_\nu^2 g, \mathbb{H}g\ra\ra-\la\la \psi_\nu^2 g, \p\p^*\mathbb{G} g\ra\ra
  -\la\la 2\psi_\nu \p \psi_\nu\wedge g, \mathbb{G}\p g\ra\ra\nonumber
\end{split}
\end{equation}
from which we see that
\begin{equation}
  \begin{split}
  &\lim_{\nu\rightarrow\infty}\la\la \psi_\nu^2 \p g, \mathbb{G}\p g\ra\ra\\
  &=\la\la g, g\ra\ra-\la\la g, \mathbb{H}g\ra\ra-\la\la g, \p\p^*\mathbb{G} g\ra\ra
  -\lim_{\nu\rightarrow\infty}\la\la 2\psi_\nu \p \psi_\nu\wedge g, \mathbb{G}\p g\ra\ra.\nonumber
\end{split}
\end{equation}
We notice that
\begin{equation}
  \begin{split}
 \la\la g, \p\p^*\mathbb{G} g\ra\ra
 &= \lim_{k\rightarrow\infty}\la\la g_k, \p\p^*\mathbb{G} g\ra\ra\\
  &=\lim_{k\rightarrow\infty} \la\la \p^*g_k, \p^*\mathbb{G} g\ra\ra\\
  &= \la\la \p^*g, \p^*\mathbb{G} g\ra\ra\geq0\nonumber
\end{split}
\end{equation}
for some sequence $\{g_k\}_{k=1}^{\infty}$ with compact support in $X$ such that
$\p^*g_k\rightarrow \p^*g$ with respect to the weak $L^2$-topology thanks to the completeness of $X$.
We also notice that
\begin{equation}
\begin{split}
  |\la\la 2\psi_\nu \p \psi_\nu\wedge g, \mathbb{G}\p g\ra\ra|&\leq\|\p \psi_\nu\|\|g\|\|2\psi_\nu\mathbb{G}\p g\|\\
  &\leq\|\p \psi_\nu\|(\|g\|^2+\|\psi_\nu\mathbb{G}\p g\|^2)\\
  &\leq2^{-\nu}(\|g\|^2+\|\psi_\nu\mathbb{G}\p g\|^2)\\
  &\leq2^{-\nu}(\|g\|^2+\|\mathbb{G}\p g\|^2)\rightarrow0.\nonumber
  \end{split}
\end{equation}
Thus we get
\begin{equation}\label{yu}
  \begin{split}
  \lim_{\nu\rightarrow\infty}\la\la \psi_\nu^2 \p g, \mathbb{G}\p g\ra\ra
  &\leq\|g\|^2-\lim_{\nu\rightarrow\infty}\la\la 2\psi_\nu \p \psi_\nu\wedge g, \mathbb{G}\p g\ra\ra=\|g\|^2.
\end{split}
\end{equation}
On the other hand,
\begin{equation}
  \begin{split}
   \la\la \psi_\nu^2 \bp ^*\bp\mathbb G\p g, \mathbb{G}\p g\ra\ra&=\la\la  \bp ^*\bp\mathbb G\p g, \psi_\nu^2\mathbb{G}\p g\ra\ra\\
   &=\la\la  \bp\mathbb G\p g, \bp (\psi_\nu^2\mathbb{G}\p g)\ra\ra\\
   &=\la\la  \bp\mathbb G\p g, 2\psi_\nu\bp\psi_\nu\wedge\mathbb{G}\p g\ra\ra+\la\la  \bp\mathbb G\p g, \psi_\nu^2\bp\mathbb{G}\p g\ra\ra.\nonumber
  \end{split}
\end{equation}
But note that
\begin{equation}
\begin{split}
  |\la\la  \bp\mathbb G\p g, 2\psi_\nu\bp\psi_\nu\wedge\mathbb{G}\p g\ra\ra|&\leq\|\bp\psi_\nu\| \|2\psi_\nu\bp\mathbb G\p g\| \|\mathbb{G}\p g\|\\
 &\leq\|\bp\psi_\nu\| \|2\bp\mathbb G\p g\| \|\mathbb{G}\p g\|\\
 &\leq2^{-\nu} \|2\bp\mathbb G\p g\| \|\mathbb{G}\p g\|\rightarrow0.\nonumber
\end{split}
\end{equation}
Therefore
\begin{equation}\label{io}
 \lim_{\nu\rightarrow\infty}\la\la \psi_\nu^2 \bp ^*\bp\mathbb G\p g, \mathbb{G}\p g\ra\ra=\|\bp\mathbb{G}\p g\|^2.
\end{equation}
Now, from (\ref{er}), (\ref{ty}), (\ref{yu}) and (\ref{io}) we see that
\begin{equation}
  \begin{split}
  \|\bp ^*\mathbb{G}\p g\|^2 &=\lim_{\nu\rightarrow\infty}\|\psi_\nu\bp ^*\mathbb{G}\p g\|^2\\
  &\leq\lim_{\nu\rightarrow\infty}\frac{1}{1-2^{-\nu}}\Big(2^{-\nu}\| \mathbb{G}\p g\|^2+\la\la \psi_\nu^2 \bp\bp ^*\mathbb{G}\p g, \mathbb{G}\p g\ra\ra\Big)\\
  &=\lim_{\nu\rightarrow\infty}\la\la \psi_\nu^2 \bp\bp ^*\mathbb{G}\p g, \mathbb{G}\p g\ra\ra\\
  &=\lim_{\nu\rightarrow\infty}\la\la \psi_\nu^2 \p g, \mathbb{G}\p g\ra\ra-\lim_{\nu\rightarrow\infty}\la\la \psi_\nu^2 \bp ^*\bp\mathbb G\p g, \mathbb{G}\p g\ra\ra\\
  &\leq\|g\|^2- \|\bp\mathbb{G}\p g\|^2\leq\|g\|^2\nonumber
  \end{split}
\end{equation}
where we used $\lim_{\nu\rightarrow\infty}\frac{1}{1-2^{-\nu}}2^{-\nu}\| \mathbb{G}\p g\|^2=0.$
This completes the proof.
\eproof

Let $(X,\omega)$ be a complete $d$-bounded \ka manifold of dimension $n$. We want to use Theorem \ref{po} to study the problem of extension of holomorphic canonical forms on the complete $d$-bounded \ka manifold $(X,\omega)$ in the following.
For this purpose, we let $A_{(2)}^{p,q}(X,\omega)$ be the space of smooth $(p,q)$ forms $g$ on $X$
such that $g$ is $L^2$ global integrable with respect to $\omega$ and
let $\varphi\in A^{0,1}(X,T^{1,0}_X)$ be an integral Beltrami differential on $X$
such that its $L_\infty$-norm $\|\varphi\|_{\omega,\infty}$ with respect to $\omega$ is less than 1,
that is, $$\|\varphi\|_{\omega,\infty}<1.$$
Then for any $g\in A_{(2)}^{p,q}(X,\omega)$ we have
\begin{equation*}
  \|\varphi\lrcorner g\|_\omega\leq\|\varphi\|_{\omega,\infty}\|g\|_\omega\leq\|g\|_\omega<\infty.
\end{equation*}
Following the paper \cite{LZ} we consider the operator 
\begin{equation}\nonumber
T: L^{p,q}_{(2)}(X,\omega)\rightarrow L^{p+1,q-1}_{(2)}(X,\omega)
\end{equation}
defined by
\begin{equation}\nonumber
  T=\bp ^*\mathbb{G}\p.
\end{equation}
The domain $\text{Dom}T$ of $T$ by definition is
\begin{equation*}
 \text{Dom}T =\{g\in L^{p,q}_{(2)}(X,\omega)\,\,|\,\,Tg\in L^{p+1,q-1}_{(2)}(X,\omega)\},
\end{equation*}
which, obviously, coincides with the domain $\text{Dom}\p$ of $\p$, that is,
\begin{equation}\label{ggcv}
\text{Dom}T=\text{Dom}\p = \{g\in L^{p,q}_{(2)}(X,\omega)\,\,|\,\,\p g\in L^{p+1,q-1}_{(2)}(X,\omega)\}.
\end{equation}
For the details of (\ref{ggcv}) see formulas (\ref{ssjk}) and (\ref{nnas}).
By Theorem \ref{po} we have that $T$ is an operator of norm less than or equal to 1
in the Hilbert space of $L^2$ forms,
that is, for any $g\in L_{(2)}^{p-1,q}(X,\omega)$ with $\p g\in L^{p,q}_{(2)}(X,\omega)$,
$$\|Tg\|_\omega=\|\bp^*\G\p g\|_\omega\leq \|g\|_\omega.$$
In particular, we have
\blemma\label{inj}
Let $(X,\omega)$ be a complete $d$-bounded \ka manifold of dimension $n$ and $\varphi\in A^{0,1}(X,T^{1,0}_X)$
be an integral Beltrami differential on $X$ such that 
$\|\varphi\|_{\omega,\infty}<1$.
Then the operator $I+T\varphi$ is injective on the domain $\text{Dom}T\varphi\cap A^{p,q}_{(2)}(X,\omega)$.
\elemma
\bproof
For any nonzero $x\in \text{Dom}T\varphi\cap A^{p,q}_{(2)}(X,\omega)$,
we have
\begin{equation}
  \|T\varphi x\|\leq\|\varphi x\|\leq\|\varphi\|_{\omega,\infty}\|x\|<\|x\|
\end{equation}
by Theorem \ref{po}.
Let $(I+T\varphi)x=0$, where $x\in\text{Dom}T\varphi\cap A^{p,q}_{(2)}(X,\omega)$.
If $x\neq0$ then we find
\begin{equation*}
  0=\|(I+T\varphi)x\|\geq\|x\|-\|T\varphi\|>\|x\|-\|x\|=0,
\end{equation*}
a contradiction. Therefore the equation $(I+T\varphi)x=0$ has only zero solution,
which gives the injectivity of $I+T\varphi$.
\eproof

In order to study the extension of holomorphic canonical forms on the complete $d$-bounded \ka manifold $(X,\omega)$,
we need to restrict the operator $I+T\varphi$
to the domain $\text{Dom}T\varphi\cap A^{p,q}_{(2)}(X,\omega)$.
Thus, in the following, we only consider the case that the operator
$I+T\varphi$ is defined by
$$I+T\varphi:\,\,\text{Dom}T\varphi\cap A^{p,q}_{(2)}(X,\omega)\rightarrow A^{p,q}_{(2)}(X,\omega).$$
Then the operator $I+T\varphi$ is injective by Lemma \ref{inj} and
$$\text{Im}(I+T\varphi)=(I+T\varphi)(\text{Dom}T\varphi\cap A^{p,q}_{(2)}(X,\omega)).$$
Here we need some basic results from classical deformation theory.
For the details, we recommend the reader to see \cite{GLT,LR,LRY,LZ}.
\begin{lemma} [\cite{LRY,LZ}]\label{holo}
Let $M$ be a complex manifold of dimension $n$ and let
$\varphi\in A^{0,1}(M,T^{1,0}_M)$ be an integral Beltrami differential.
Then for any smooth $(n,0)$-form $\Omega\in A^{n,0}(X)$, the corresponding $(n,0)$-form
$\rho_\varphi (\Omega)=e^{\varphi}\lrcorner\Omega$ on $M_\varphi$
is holomorphic if and only if
\begin{equation}\label{holo.}
\overline{\partial}\Omega=-\partial(\varphi\lrcorner\Omega).
\end{equation}
\end{lemma}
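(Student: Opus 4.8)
The plan is to reduce the statement to a pointwise piece of linear algebra on $M_\varphi$ together with one bidegree computation, exploiting that $e^{\varphi}\lrcorner$ is an algebra automorphism of the exterior algebra. First I would record that the contraction $\varphi\lrcorner$ by the $T^{1,0}_M$-valued $(0,1)$-form $\varphi$ is a derivation of $\bigoplus_{p,q}A^{p,q}(M)$ of bidegree $(-1,1)$ and hence of total degree $0$; an even derivation exponentiates to an \emph{algebra automorphism}, so $e^{\varphi}\lrcorner=\sum_k\frac{1}{k!}\varphi^k\lrcorner$ is invertible with inverse $e^{-\varphi}\lrcorner$. Since $e^{\varphi}\lrcorner$ fixes functions and sends $dz^i$ to $\zeta^i:=dz^i+\varphi^i_{\bar j}\,d\bar z^j$, I would check directly that the $\zeta^i$ annihilate $T^{0,1}_{M_\varphi}$ and hence form a local $(1,0)$-coframe for $M_\varphi$. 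Writing $\Omega=f\,dz^1\wedge\cdots\wedge dz^n$ locally, the automorphism property then yields $\rho_\varphi(\Omega)=e^{\varphi}\lrcorner\Omega=f\,\zeta^1\wedge\cdots\wedge\zeta^n$, so $\rho_\varphi(\Omega)$ is genuinely of type $(n,0)$ with respect to $M_\varphi$.

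Next I would invoke the standard fact that a top-degree $(n,0)$-form $\Theta$ on the $n$-dimensional complex manifold $M_\varphi$ is holomorphic if and only if $\bp_\varphi\Theta=0$. Because $\Theta$ has maximal holomorphic degree, $\p_\varphi\Theta$ is of type $(n+1,0)=0$; and because $\varphi$ is integrable, $M_\varphi$ is an honest complex manifold, so $d=\p_\varphi+\bp_\varphi$ on forms and there is no $(n-1,2)_\varphi$ Nijenhuis term. Hence $d\Theta=\bp_\varphi\Theta$, and $\rho_\varphi(\Omega)$ is holomorphic precisely when $d\rho_\varphi(\Omega)=0$.

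Finally I would compute $d\rho_\varphi(\Omega)=d(e^{\varphi}\lrcorner\Omega)$ and isolate the component that detects its vanishing. Since $e^{\varphi}\lrcorner\Omega=\Omega+\varphi\lrcorner\Omega+\tfrac12\varphi^2\lrcorner\Omega+\cdots$ decomposes into original bidegrees $(n,0),(n-1,1),(n-2,2),\dots$ and $\p\Omega=0$, the original bidegree $(n,1)$ part of $d(e^{\varphi}\lrcorner\Omega)$ receives contributions only from $\bp\Omega$ and from $\p(\varphi\lrcorner\Omega)$, giving exactly $\bp\Omega+\p(\varphi\lrcorner\Omega)$. By Step~2 the form $d\rho_\varphi(\Omega)$ is a priori of pure type $(n,1)_\varphi$, and the projection of $(n,1)_\varphi$-forms onto the original bidegree $(n,1)$ differs from the identity by a term quadratic in $\varphi$, hence is injective when $\|\varphi\|_{\omega,\infty}<1$. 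Therefore $d\rho_\varphi(\Omega)=0$ if and only if its $(n,1)$-component vanishes, i.e. $\bp\Omega+\p(\varphi\lrcorner\Omega)=0$, which is precisely (\ref{holo.}).

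The hard part will be the passage in Step~3 from the vanishing of a single bidegree component to the vanishing of the whole form $d\rho_\varphi(\Omega)$: this is exactly where integrability of $\varphi$ is indispensable, since $\bp\varphi=\tfrac12[\varphi,\varphi]$ is what forces $d\rho_\varphi(\Omega)$ to be of pure type $(n,1)_\varphi$, and one must track the bidegree bookkeeping and the invertibility of the projection with care. An equivalent route that bypasses the projection argument is to establish a gauge identity of the form $d(e^{\varphi}\lrcorner\Omega)=e^{\varphi}\lrcorner\bigl(\bp\Omega+\p(\varphi\lrcorner\Omega)+R(\varphi)\bigr)$, where the remainder $R(\varphi)$ vanishes by the integrability relation, and then to conclude by invertibility of $e^{\varphi}\lrcorner$; on that path the bulk of the labor is the Cartan-type commutator computation producing $R(\varphi)$.
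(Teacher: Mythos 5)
You should first be aware that the paper contains no proof of Lemma \ref{holo}: it is imported verbatim from \cite{LRY,LZ} (``Here we need some basic results from classical deformation theory\dots''), so the only meaningful comparison is with the proofs in those references. Your proposal is correct, and its main route is genuinely different from theirs. The cited proofs share your Steps 1--2 ($e^{\varphi}\lrcorner$ is an exterior-algebra automorphism sending $\Omega$ to an $(n,0)$-form on $M_\varphi$, and a top-degree $(n,0)$-form is holomorphic iff it is $d$-closed), but they then conclude via exactly the ``alternative route'' you mention at the end: a Cartan-type gauge identity $d(e^{\varphi}\lrcorner\Omega)=e^{\varphi}\lrcorner\bigl(\bp\Omega+\p(\varphi\lrcorner\Omega)+R(\varphi)\lrcorner\Omega\bigr)$, with $R(\varphi)$ proportional to $\bp\varphi-\tfrac12[\varphi,\varphi]$ (hence zero by integrability), followed by invertibility of $e^{\varphi}\lrcorner$. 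Your main route replaces that commutator computation with pointwise linear algebra: $d\rho_\varphi(\Omega)$ is of pure type $(n,1)_\varphi$ by integrability, its original-bidegree $(n,1)$ part is exactly $\bp\Omega+\p(\varphi\lrcorner\Omega)$, and the projection from $(n,1)_\varphi$-forms to original $(n,1)$-forms is injective. This is sound, and the injectivity claim can be made precise: in the frames $\zeta^1\wedge\cdots\wedge\zeta^n\wedge\bar\zeta^{\,j}\mapsto dz^1\wedge\cdots\wedge dz^n\wedge d\bar z^{\,k}$ the projection is the matrix $\delta^j_k-\bar\varphi^j_i\varphi^i_{\bar k}$, i.e.\ $I-\bar\varphi\varphi$ acting on the antiholomorphic index, which is invertible whenever $M_\varphi$ is a genuine almost complex structure --- in particular, but not only, when $\|\varphi\|_{\omega,\infty}<1$. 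Two caveats for the write-up: (i) the norm bound $\|\varphi\|_{\omega,\infty}<1$ you invoke is not a hypothesis of the lemma (though it holds in every application in this paper), so you should either derive injectivity from the well-definedness of $M_\varphi$ itself or state the extra assumption; (ii) the explicit matrix computation above is the actual content of your phrase ``differs from the identity by a term quadratic in $\varphi$'' and must be written out, signs included. As for what each approach buys: yours avoids any Cartan formula, isolates precisely where integrability enters (purity of type of $d\rho_\varphi(\Omega)$), and is arguably more elementary; the gauge identity of \cite{LRY,LZ} is stronger, since it computes $d(e^{\varphi}\lrcorner\Omega)$ exactly rather than only characterizing its vanishing, and that explicit form is what feeds the iteration schemes and $L^2$ estimates used elsewhere in this paper and its sources.
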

\begin{remark}
Equation (\ref{holo.}) is called the extension equation since its solution
can be used to construct the extensions of holomorphic $(n,0)$-forms
from complex manifold $M$ to complex manifold $M_\varphi$.
\end{remark}
Let $(X,\omega)$ be a complete $d$-bounded \ka manifold of dimension $n$. Then the complete \ka manifold $(X,\omega)$ admits $L^2$ Hodge theory, see Theorem \ref{Key} as above.
Let $\varphi\in A^{0,1}(X,T^{1,0}_X)$ be an integral Beltrami differential on $X$.
We want to apply the $L^2$ Hodge theory on $X$ to solving the equation (\ref{holo.}).
Thus we consider naturally the extension equation in $L^2$ sense,
that is, we want to find $L^2$ and smooth $(n,0)$-forms $\Omega\in A_{(2)}^{n,0}(X,\omega)$ such that
\begin{equation}\label{holo..}
\overline{\partial}\Omega=-\partial(\varphi\lrcorner\Omega),
\,\,\text{with}\,\, \Omega\in \text{Dom}\bp\,\,\text{and}\,\,\varphi\lrcorner\Omega\in \text{Dom}\p.
\end{equation}
By Lemma \ref{holo} the solution of equation (\ref{holo..}) can be used
to construct extensions of holomorphic $(n,0)$-forms from $X$ to $X_\varphi$.
Here we should point out that since the complex manifold $X$
 we considered in this paper may not be compact
we can not directly use Hodge theory as in \cite{LRY,LZ} to solve the equation (\ref{holo..}).
Indeed, in general, the classical Hodge theory does not hold
when the underlying complex manifold is not compact.
However, for  a complete $d$-bounded \ka manifold $(X,\omega)$, by Theorem \ref{Key} we know that $(X,\omega)$ admits the $L^2$ Hodge theory, which provides a possibility to solve the equation (\ref{holo..}) by the same methods as presented in \cite{LRY,LZ}.

\blemma\label{sol}
Let $(X,\omega)$ be a complete $d$-bounded \ka manifold of dimension $n$ and $\varphi\in A^{0,1}(X,T^{1,0}_X)$
be an integral Beltrami differential on $X$ such that 
$\|\varphi\|_{\omega,\infty}<1$.
If $\Omega$ is a smooth $(n,0)$-form in $A^{n,0}_{(2)}(X,\omega)$
such that
\begin{equation}\label{ooh}
  \bp (I+T\varphi)\Omega=0, \,\,\text{with}\,\, \Omega\in\text{Dom} T\varphi\cap A^{n,0}_{(2)}(X,\omega)
\end{equation}
then $\Omega$ gives a solution of the $L^2$ extension equation $($\ref{holo..}$)$.
\elemma
\bproof
First we note that for any $\Omega\in \text{Dom} T\varphi\cap A^{n,0}_{(2)}(X,\omega)$
we have $$\varphi\lrcorner\Omega\in \text{Dom}T=\text{Dom}\p.$$
Let $\Omega_0=(I+T\varphi)\Omega \in Im(I+T\varphi)\subset A^{n,0}_{(2)}(X,\omega)$.
Then $$\Omega=\Omega_0-T\varphi\Omega
  =\Omega_0-\bp ^*\mathbb{G}\p(\varphi\lrcorner\Omega).$$
It follows that
$$\Omega\in \text{Dom}\bp$$
since $\bp\Omega_0= \bp (I+T\varphi)\Omega=0$ and $\bp ^*\mathbb{G}\p(\varphi\lrcorner\Omega)\in \text{Dom}\bp.$
By the injectivity of $I+T\varphi$ (see Lemma \ref{inj}) we have the equation
$$\Omega=(I+T\varphi)^{-1}\Omega_0$$
in $\text{Dom} T\varphi\cap A^{p,q}_{(2)}(X,\omega)$.
We need to show
$$\overline{\partial}\Omega=-\partial(\varphi\lrcorner\Omega).$$
Indeed, from the $L^2$ Hodge theory on $(X,\omega)$, it follows that
\begin{equation}\label{}
  \begin{split}
    \overline{\partial}\Omega &=-\bp\bp ^*\mathbb{G}\p(\varphi\lrcorner\Omega) \\
    &=(\bp ^*\bp- \Delta_{\overline{\partial}})\mathbb{G}\p(\varphi\lrcorner\Omega)\\
    &=(\bp ^*\bp\mathbb{G}-\mathbb{I}+\mathbb{H})\p(\varphi\lrcorner\Omega)\\
    &=-\p(\varphi\lrcorner\Omega)+\bp ^*\bp\mathbb{G}\p(\varphi\lrcorner\Omega).
  \end{split}
\end{equation}
Let $\Phi=\overline{\partial}\Omega+\partial(\varphi\lrcorner\Omega).$
Then it suffices to show $\Phi=0$. We compute
\begin{equation}\label{}
 \begin{split}
   \Phi &=\overline{\partial}\Omega+\partial(\varphi\lrcorner\Omega)\\
     &=\bp^*\bp\mathbb{G}\p(\varphi\lrcorner\Omega)\\
     &= -\bp^*\mathbb{G}\p\bp(\varphi\lrcorner\Omega)\\
     &=-\bp^*\mathbb{G}\p((\bp\varphi)\lrcorner\Omega+\varphi\lrcorner\bp\Omega)\\
     &=-\bp^*\mathbb{G}\p(\frac{1}{2}[\varphi,\varphi]\lrcorner\Omega
     +\varphi\lrcorner(\Phi-\partial(\varphi\lrcorner\Omega)))\\
     &=-\bp^*\mathbb{G}\p(\varphi\lrcorner\Phi)
 \end{split}
\end{equation}
where in the last equality, we have used $\p\Omega=0$, $\p^2=0$ and the formula
\begin{equation*}
  [\varphi,\varphi]\lrcorner\Omega=2\varphi\lrcorner\p\varphi\lrcorner\Omega
  -\p(\varphi\lrcorner\varphi\lrcorner\Omega)-\varphi\lrcorner\varphi\lrcorner\p\Omega.
\end{equation*}
If $\Phi\neq0$ then by Theorem \ref{po} and the condition $\|\varphi\|_{\omega,\infty}<1$ we obtain
\begin{equation*}
  \|\Phi\|\leq\|\varphi\lrcorner\Phi\|\leq\|\varphi\|_{\omega,\infty}\|\Phi\|<\|\Phi\|,
\end{equation*}
a contradiction.Thus we conclude that $\Phi=0$ and
\begin{equation*}
  \overline{\partial}\Omega=-\partial(\varphi\lrcorner\Omega),
\end{equation*}
as desired.
\eproof
Conversely, we have
\blemma \label{hhx}
Let $(X,\omega)$ be a complete $d$-bounded \ka manifold of dimension $n$ and $\varphi\in A^{0,1}(X,T^{1,0}_X)$
be an integral Beltrami differential on $X$ such that 
$\|\varphi\|_{\omega,\infty}<1$.
If the $(n, 0)$-form $\Omega\in A_{(2)}^{n,0}(X,\omega)$ satisfies the equation $($\ref{holo..}$)$,
then $\Omega$ satisfies the equation $($\ref{ooh}$)$.
\elemma
\bproof
Assume that $\Omega\in A_{(2)}^{n,0}(X,\omega)$ satisfies the equation $($\ref{holo..}$)$.
Then we have $\Omega\in\text{Dom} T\varphi\cap A^{n,0}_{(2)}(X,\omega)$
since $\varphi\lrcorner\Omega\in \text{Dom}\p=\text{Dom}T$.
Applying the operator $\bp^*\mathbb{G}$ to (\ref{holo..}), we find
\begin{equation}\label{ssy}
  \bp^*\mathbb{G}\overline{\partial}\Omega=-\bp^*\mathbb{G}\partial(\varphi\lrcorner\Omega).
\end{equation}
From the basic properties of $\mathbb{G}$ and $\mathbb{H}$, we have
\begin{equation}\label{jja}
   \bp^*\mathbb{G}\overline{\partial}\Omega= \bp^*\overline{\partial}\mathbb{G}\Omega=\Delta_{\overline{\partial}}\mathbb{G}=\Omega-\mathbb{H}\Omega.
\end{equation}
Then combining the equations (\ref{ssy}) and ({\ref{jja}}) we obtain
\begin{equation}\label{qqg}
 \mathbb{H}\Omega=\Omega+\bp^*\mathbb{G}\partial(\varphi\lrcorner\Omega)=(I+T\varphi)\Omega.
\end{equation}
Note that $\mathbb{H}\Omega$ is a harmonic $(n,0)$-form on $X$.
Thus it follows that
$$ \bp (I+T\varphi)\Omega=\bp  \mathbb{H}\Omega=0$$
as desired.
\eproof
Summarizing Lemma \ref{sol} and Lemma \ref{hhx}, we obtain 
\btheorem\label{main..}
Let $(X,\omega)$ be a complete $d$-bounded \ka manifold of dimension $n$ and $\varphi\in A^{0,1}(X,T^{1,0}_X)$
be an integral Beltrami differential on $X$ such that 
$\|\varphi\|_{\omega,\infty}<1$.
Then an $(n, 0)$-form $\Omega \in A^{n,0}_{(2)}(X,\omega)$ satisfies the equation $($\ref{holo..}$)$,
that is,
\begin{equation*}
\overline{\partial}\Omega=-\partial(\varphi\lrcorner\Omega),
\,\,\text{with}\,\, \Omega\in \text{Dom}\bp\,\,\text{and}\,\,\varphi\lrcorner\Omega\in \text{Dom}\p,
\end{equation*}
if and only if it satisfies the equation $($\ref{ooh}$)$, that is,
\begin{equation*}
   \bp (I+T\varphi)\Omega=0, \,\,\text{with}\,\, \Omega\in\text{Dom} T\varphi\cap A^{n,0}_{(2)}(X,\omega).
\end{equation*}
 \etheorem

Denote 
$$\mathcal{A}^{n,0}_{(2)}(X,\omega,\varphi):=
Im(I+T\varphi)\subset A^{n,0}_{(2)}(X,\omega).
$$
Following the papers \cite{LRY,LZ} we define the deformation operator 
$\rho_{\omega, \varphi}: \mathcal{A}^{n,0}_{(2)}(X,\omega,\varphi)\rightarrow A^{n,0}(X_{\varphi})$ by 
\begin{equation}
\rho_{\omega, \varphi}(\Omega)=e^{\varphi}\lrcorner(I+T\varphi)^{-1}\Omega.\nonumber
\end{equation}
By Lemma \ref{inj} the deformation operator is well-defined.
Then we have the following main result about extensions of holomorphic canonical forms from $X$ to $X_\varphi$.
\btheorem[=Theorem \ref{.main}]\label{main.}
Let $(X,\omega)$ be a complete $d$-bounded \ka manifold of dimension $n$ and $\varphi\in A^{0,1}(X,T^{1,0}_X)$
be an integral Beltrami differential on $X$ such that 
$\|\varphi\|_{\omega,\infty}<1$.
Then for any holomorphic $(n,0)$-form $\Omega$ in $\mathcal{A}^{n,0}_{(2)}(X,\omega,\varphi)$,
the expression $\rho_{\omega,\varphi}(\Omega)$
defines a holomorphic $(n,0)$-form on $X_\varphi$ with $\rho_{\omega,0}(\Omega)=\Omega$.
\etheorem
\bproof
By the definition of $\mathcal{A}^{n,0}_{(2)}(X,\omega,\varphi)$ we know that for any smooth $(n,0)$-form $\Omega$ in $\mathcal{A}^{n,0}_{(2)}(X,\omega,\varphi)$ there exists an $(n, 0)$-form $\alpha\in\text{Dom} T\varphi\cap A^{n,0}_{(2)}(X,\omega)$
such that $$\Omega=(I+T\varphi)\alpha.$$
By the injectivity of $I+T\varphi$, see Lemma \ref{inj}, we know that
this $\alpha$ is exactly $(I+T\varphi)^{-1}\Omega$, that is,
$$\alpha=(I+T\varphi)^{-1}\Omega.$$
Moreover, if $\Omega$ is holomorphic, then $$\bp (I+T\varphi)\alpha=\bp \Omega=0.$$
By Theorem \ref{main..} and Lemma \ref{holo}
the equation
\begin{equation}
\rho_{\omega, \varphi}(\Omega)=e^{\varphi}\lrcorner(I+T\varphi)^{-1}\Omega=e^{\varphi}\lrcorner\alpha\nonumber
\end{equation}
defines a holomorphic $(n,0)$-form on $X_\varphi$ with $\rho_{\omega,0}(\Omega)=\Omega$, as desired.
\eproof

We remark that theorem \ref{main.} generalizes Theorem 1.1 in \cite{LZ} from the compact to noncompact cases, which is closely related to a famous conjecture due to Siu \cite{Siu98, Siu}, about the invariance of plurigenera for compact
\ka manifolds.


\begin{thebibliography}{99}      
\bibitem{Dem} Demailly, J P. Analytic Methods in Algebraic Geometry,
Higher Education Press, Surveys of Modern Mathematics, Vol. 1, 2010
\bibitem{Demailly02} Demailly J P.
Complex analytic and differential geometry. Grenoble: Universit de Grenoble I. (1997)

\bibitem{Fujino}
Fujino O. A transcendental approach to Kollar's injectivity theorem[J].
Osaka Journal of Mathematics, 2012, 49(3): 833-852.
\bibitem{Gromov}
 Gromov  M. \ka hyperbolicity and $L_2$-Hodge theory. J. Differential Geom. 33, (1991) 263-292.
\bibitem{GH}Griffiths P. and Harris J. Principles of algebraic
geometry, Pure and Applied Mathematics, Wiley-Interscience [John Wiley \&Sons], New York, 1978.
\bibitem{GLT} Guan F.,  Liu K. and Todorov A. A global Torelli
theorem for Calabi-Yau manifolds, arxiv.org/abs/1112.1163.
\bibitem{Hormander09} H\"{o}rmander L.
$L^2$ estimates and existence theorems for the $\overline{\partial}$ operator. Acta Mathematica, 1965, 113(1): 89-152.
\bibitem{KK}
Kashiwara M, Kawai T. The Poincare lemma for variations of polarized Hodge structure[J].
Publications of the Research Institute for Mathematical Sciences, 1987, 23(2): 345-407.
\bibitem{LR}Liu K. and Rao S. Remarks on the Cartan formula and
its applications, ASIAN J. MATH. Vol. 16, No. 1, pp. 157-170,
\bibitem{LRY}
Liu K., Rao S. and Yang X. Quasi-isometry and deformations of Calabi Yau manifolds[J]. Inventiones mathematicae, 2015, 199(2): 423-453.
\bibitem{LSY}Liu K., Sun X. and Yau S. Recent development on the
geometry of the Teichm\"{u}ller and moduli spaces of Riemann surfaces,
Surveys in differential geometry. Vol. XIV. Geometry of Riemann surfaces and their moduli spaces, 221-259.
\bibitem{LZ}
Liu K, Zhu S. Solving equations with Hodge theory[J].
arXiv preprint arXiv:1803.01272, 2018.

\bibitem{Matsumura S.}
Matsumura S. An injectivity theorem with multiplier ideal sheaves of singular metrics with transcendental singularities[J]. arXiv preprint arXiv:1308.2033, 2013.

\bibitem{MK}Morrow J. and Kodaira K. Complex manifolds, Holt,
Rinehart and Winston, Inc., New York-Montreal, Que.-London, 1971.
\bibitem{Ohsawa}
Ohsawa T. $L^2$ Approaches in Several Complex Variables[M]. Springer, 2015.
\bibitem{Siu98}
Siu Y. Invariance of Plurigenera, Invent. Math., 134 (1998), 661–673.
\bibitem{Siu}
Siu Y. Extension of twisted pluricanonical sections with plurisubharmonic weight and
invariance of semipositively twisted plurigenera for manifolds not necessarily of general
type. Complex geometry (G\"{o}ttingen, 2000), 223-277, Springer, Berlin, (2002).
\bibitem{[T]}Tian G. Smoothness of the universal deformation space
of compact Calabi-Yau manifolds and its Petersson-Weil metric, Mathematical
aspects of string theory (San Diego, Calif., 1986), 629-646, Adv. Ser. Math. Phys., 1, World Sci. Publishing, Singapore, 1987.
\bibitem{[To89]}Todorov A. The Weil-Petersson geometry of the moduli
space of $\mathbb{SU}${$(n\geq3)$ (Calabi-Yau) manifolds I}, Comm. Math. Phys., 126(2): 325--346, 1989.

\bibitem{Zucker22}Zucker, S.
Hodge theory with degenerating coefficients: $L^2$ cohomology in the
Poincar\'{e} metric. Ann. of Math.,  \textbf{109}(1979), 415--476.


\end{thebibliography}
\end{document}